\title{Two-outcome synchronous correlation sets and Connes' embedding problem}
\author{Travis B. Russell}
\affil{Army Cyber Institute, \\ United States Military Academy, \\ West Point, NY}
\affil{\textit{travis.russell@westpoint.edu}}
\date{}
\newtheorem{theorem}{Theorem}[section]
\newtheorem{lemma}[theorem]{Lemma}
\newtheorem{proposition}[theorem]{Proposition}
\newtheorem{corollary}[theorem]{Corollary}
\newtheorem{definition}[theorem]{Definition}
\newtheorem{remark}[theorem]{Remark}
\newtheorem{problem}[theorem]{Problem}
\newcommand*\Span{\text{span}}
\begin{document}

\maketitle

\begin{abstract} We show that Connes' embedding problem is equivalent to the weak Tsirelson problem in the setting of two-outcome synchronous correlation sets. We further show that the extreme points of two-outcome synchronous correlation sets can be realized using a certain class of universal C*-algebras. We examine these algebras in the three-experiment case and verify that the strong and weak Tsirelson problems have affirmative answers in that setting. \end{abstract}

\section{Introduction}

Connes' embedding problem asks whether each $II_1$-factor acting on a separable Hilbert space admits a trace-preserving embedding into $\mathcal{R}^\omega$, the ultraproduct of the hyperfinite $II_1$ factor $\mathcal{R}$ \cite{ConnesConjecture}. In spite of decades of interest and many equivalent reformulations, this problem remains unsovled. 

Connes' problem has received renewed interest in light of a surprising reformulation in terms of Tsirelson's problems in quantum information theory. The weak Tsirelson problem asks whether the closure of the set of quantum correlations $C_q(n,m)$, a convex subset of $\mathbb{R}^{n^2 m^2}$, is equal to the set of quantum-commuting correlations $C_{qc}(n,m)$, a closed convex subset of $\mathbb{R}^{n^2 m^2}$, for every pair of positive integers $n$ and $m$. The connection to Connes' problem was first observed independently by Junge et. al. \cite{JMPPSW2011} and Fritz \cite{FritzKirchberg}. Ultimately Ozawa \cite{MR3067294} proved that Connes' problem is equivalent to the weak Tsirelson problem. Tsirelson's problem has important ramifications in quantum theory, as it essentially asks whether the quantum mechanics of Bohr-Heisenberg produces the same experimental data as the relativistic quantum mechanics of Haag-Kastler in the bipartite setting \cite{JMPPSW2011}. Moreover an understanding of the geometry of Tsirelson's correlation sets could have important practical significance for quantum communication and quantum computing \cite{PhysRevA.97.022104}.

As a byproduct of these connections, many authors from the mathematical community have invested their attention in Tsirelson's problems recently. Of special interest to us will be a series of papers related to synchronous quantum correlations sets. The synchronous quantum correlation sets are affine slices of the original correlation sets of Tsirelson. The study of synchronous quantum correlation sets was initiated in \cite{MR3460238} which produced a representation theorem for these correlations in terms tracial states on C*-algebras. Subsequently, Dykema-Paulsen proved in \cite{DykemaPaulsen2016} that Connes' embedding problem is equivalent to the weak Tsirelson problem restricted to the setting of synchronous correlation sets. In this paper we refine this result by showing that Connes' embedding problem is equivalent to the weak Tsirelson problem in the setting of two-outcome synchronous correlation sets. This result implies a recent unpublished result of Ozawa.

In another paper, Dykema-Paulsen-Parakash \cite{DykemaPaulsenPrakash2019} provided a negative answer to the strong Tsirelson problem (whether or not $C_q(n,m) = C_{qc}(n,m)$) in the setting of synchronous correlation sets with $n=5$ and $m=2$. In \cite{SynchronousGeometry} the author determined the geometry of the synchronous quantum correlation set in the setting of $n=3$ and $m=2$ but left the solution to Tsirelson's problems in this setting open. In this paper we complete the study of the $n=3, m=2$ case by showing that the strong Tsirelson problem has an affirmative answer in this setting. Along the way we show that Connes' problem is equivalent to the coincidence of a certain family of non-linear functionals defined on the two-outcome synchronous quantum correlation sets. Using this reformulation, we show that extreme points in the two-outcome synchronous quantum correlations can be realized with certain universal C*-algebras. We consider the structure of these C*-algebras in the three-experiment setting. All of these results build upon techniques used by Dykema-Paulsen-Prakash in \cite{DeltaGame} and the geometric simplifications of correlation sets which are implicit in Musat-R{\o}rdam \cite{Musat2019}.

Our paper is organized as follows. In section 2 we review the necessary notations and definitions and any theorems from the literature that we will need. In section 3 we establish the equivalence of Connes' conjecture with the weak Tsirelsen problem in the setting of two-outcome synchronous correlations. In section 4 we define a family of non-linear functionals. By analyzing these functionals, we obtain another equivalence with Connes' problem as well as our results concerning the three-experiment case in particular and extreme points of synchronous correlation sets in general.

\section{Preliminaries}

We first establish some notation. Let $\mathbb{N}$ denote the set of positive integers, $\mathbb{R}$ denote the set of real numbers, and $\mathbb{C}$ denote the set of complex numbers. For each $n \in \mathbb{N}$ we let $[n] = \{0,1,\dots,n-1\}$. 

By a \textbf{C*-algebra} we mean a unital self-adjoint closed subalgebra of bounded operators on a complex Hilbert space. We use basic results concerning C*-algebras freely throughout. We refer readers not familiar with the theory of C*-algebras to \cite{MR1402012}. By a \textbf{projection} we mean an operator $P$ satisfying $P^*=P$ and $P^2 = P$. By a \textbf{projection-valued measure}, we mean a set $\{P_i\}_{i \in [m]}$ of projections satisfying $\sum_{i \in [m]} P_i = I$, were $I$ denotes the identity operator. We note that when $\{P_i\}$ is a projection-valued measure it is necessarily the case that $P_i P_j = 0$ for all $i \neq j$. Given a C*-algebra $\mathfrak{A}$, we call a linear map $\phi: \mathfrak{A} \rightarrow \mathbb{C}$ a \textbf{state} if $\phi(I) = 1$ and $\phi(x^*x) \geq 0$ for all $x \in \mathfrak{A}$. 

We can now define the quantum correlation sets. Some of these definitions are non-standard, but an application of the GNS construction shows them to be equivalent to the standard ones.

\begin{definition} Let $n,m \in \mathbb{N}$. By a \textbf{correlation} we mean a tensor \[ \{ p(i,j|x,y) \}_{x,y \in [n], i,j \in [m]} \in \mathbb{R}^{n^2 m^2} \] satisfying $\sum_{i,j \in [m]} p(i,j|x,y) = 1$ for each $x,y \in [n]$. The set of all correlations is denoted by $C(n,m)$. We define the non-signaling, quantum-commuting, quantum, quantum-approximate and local correlations as follows. \begin{enumerate}
    \item A correlation $p \in C(n,m)$ is \textbf{non-signaling} if the quantities \[ p_A(i|x) = \sum_{j \in [m]} p(i,j|x,y), \quad p_B(j|y) = \sum_{i \in [m]} p(i,j|x,y) \] are well-defined (i.e. $p_A$ is independent of the choice of $y$ and $p_B$ is independent of the choice of $x$). The matrices $p_A(i|x)$ and $p_B(j|y)$ are called the marginal densities for $p$. The set of all non-signaling correlations is denoted by $C_{ns}(n,m)$.
    \item A correlation $p \in C(n,m)$ is called \textbf{quantum-commuting} if there exists a C*-algebra $\mathfrak{A}$, projection valued measures $\{E_{x,i}\}, \{F_{y,j}\} \subseteq \mathfrak{A}$ satisfying $E_{x,i} F_{y,j} = F_{y,j} E_{x,i}$ and a state $\phi$ on $\mathfrak{A}$ such that $p(i,j|x,y) = \phi(E_{x,i} F_{y,j})$. The set of all quantum-commuting correlations is denoted by $C_{qc}(n,m)$.
    \item A correlation $p \in C(n,m)$ is called a \textbf{quantum} correlation if $p$ is a quantum-commuting correlation arising from a finite dimensional C*-algebra $\mathfrak{A}$. The set of quantum correlations is denoted by $C_q(n,m)$.
    \item A correlation $p \in C(n,m)$ is called a  \textbf{quantum-approximate} correlation if it is a limit point of the quantum correlations in $\mathbb{R}^{n^2 m^2}$. We let $C_{qa}(n,m)$ denote the set of all quantum approximate correlations.
    \item A correlation $p \in C(n,m)$ is called a \textbf{local} correlation if $p$ is a quantum-commuting correlation arising from a commutative C*-algebra $\mathfrak{A}$. The set of local quantum correlations is denoted by $C_{loc}(n,m)$. \end{enumerate} \end{definition}

Morally, a correlation is a joint probability distribution produced by two independent parties, usually labeled Alice and Bob, who each have $n$ probabilistic experiments they can perform, each experiment having $m$ possible outcomes. The quantity $p(i,j|x,y)$ indicates the probability that Alice obtains outcome $i$ and Bob obtains outcome $j$ given that Alice performed experiment $x$ and Bob performed experiment $y$. The different types of correlation sets arise by imposing various restrictions on the technology Alice and Bob can employ to perform their joint experimentation.

It is well-known that the correlation sets satisfy \[ C_{loc}(n,m) \subseteq C_{q}(n,m) \subseteq C_{qa}(n,m) \subseteq C_{qc}(n,m) \subseteq C_{ns}(n,m) \subseteq C(n,m) \] and that each of these sets is convex. Moreover each of these sets, with the exception of $C_q(n,m)$, is known to be closed. In fact, $C_{loc}(n,m), C_{ns}(n,m)$ and $C(n,m)$ are easily seen to be polytopes. By contrast, the sets $C_q(n,m), C_{qa}(n,m)$, and $C_{qc}(n,m)$ are not polytopes for most choices of $n$ and $m$. We can now state the strong and weak Tsirelson problems.

\begin{problem}[Strong Tsirelson] Is $C_q(n,m) = C_{qc}(n,m)$? \end{problem}

Since $C_{qc}(n,m)$ is closed for all $n$ and $m$, the answer is negative whenever $C_q(n,m)$ fails to be a closed set, which is known to occur whenever $n \geq 5$ and $m \geq 2$ \cite{DykemaPaulsenPrakash2019} or whenever $n \geq 4$ and $m \geq 3$ \cite{CqNeqCqs}. The first example of $n$ and $m$ for which $C_q(n,m)$ is non-closed is due to Slofstra \cite{Slofstra1}. The strong Tsirelson problem remains unsolved for $n=3$ and all $m$ or $n=4$ and $m=2$. The only setting in which it is known to be true is the setting $n=m=2$.

\begin{problem}[Weak Tsirelson] Is $C_{qa}(n,m) = C_{qc}(n,m)$? \end{problem}

Clearly an affirmative answer to the strong Tsirelson problem implies an affirmative answer to the weak Tsirelson problem. To our knowledge, the weak Tsirelson problem has only been answered in the case when $n=m=2$ (in the affirmative), and is open in all other settings. The difficulty of this problem is explained by the next theorem.

\begin{theorem}[Ozawa \cite{MR3067294}] Connes' embedding problem has an affirmative answer if and only if the weak Tsirelson problem has an affirmative answer for all $n,m \in \mathbb{N}$. \end{theorem}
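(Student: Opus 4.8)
The plan is to reformulate both Connes' embedding problem and the weak Tsirelson problem as statements about the minimal and maximal C*-tensor norms on a single family of universal C*-algebras, and then to invoke Kirchberg's characterizations of Connes' problem. For $n,m\in\mathbb{N}$, let $\mathcal{A}(n,m)$ be the universal unital C*-algebra generated by $n$ projection-valued measures of size $m$; concretely, $\mathcal{A}(n,m)$ is the unital free product $\ast_{x=1}^{n}\mathbb{C}^{m}$, equivalently the full group C*-algebra $C^{\ast}(\mathbb{Z}_{m}^{\ast n})$. Using the GNS construction and the universal property, one verifies the dictionary: $p\in C_{qc}(n,m)$ if and only if $p(i,j|x,y)=\psi(E_{x,i}\otimes F_{y,j})$ for some state $\psi$ on $\mathcal{A}(n,m)\otimes_{\max}\mathcal{A}(n,m)$ (a commuting pair of PVMs in a C*-algebra is precisely a $\ast$-homomorphism out of the maximal tensor product), while $p\in C_{qa}(n,m)$ if and only if the same holds with $\otimes_{\min}$ in place of $\otimes_{\max}$. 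The second statement uses that $\mathbb{Z}_{m}^{\ast n}$ is residually finite and, by the work of Exel and Loring on finite-dimensional representations of free products, $\mathcal{A}(n,m)=\ast^{n}\mathbb{C}^{m}$ is residually finite-dimensional, hence so is $\mathcal{A}(n,m)\otimes_{\min}\mathcal{A}(n,m)$; thus the correlations coming from states on the minimal tensor product are exactly the weak$^{\ast}$-limits of quantum correlations, i.e.\ $C_{qa}(n,m)$. Granting the dictionary, the weak Tsirelson problem holds for $(n,m)$ precisely when $\mathcal{A}(n,m)\otimes_{\min}\mathcal{A}(n,m)=\mathcal{A}(n,m)\otimes_{\max}\mathcal{A}(n,m)$, so it suffices to prove that this identity, for all $n,m$, is equivalent to Connes' embedding problem.

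For the forward implication, assume Connes' problem; by Kirchberg's theorem it is equivalent to the assertion that every C*-algebra with the local lifting property has the weak expectation property. Since the local lifting property passes to unital free products and $\mathbb{C}^{m}$ is nuclear, $\mathcal{A}(n,m)=\ast^{n}\mathbb{C}^{m}$ has the local lifting property, hence the weak expectation property, hence $\mathcal{A}(n,m)\otimes_{\min}\mathfrak{B}=\mathcal{A}(n,m)\otimes_{\max}\mathfrak{B}$ for every C*-algebra $\mathfrak{B}$; taking $\mathfrak{B}=\mathcal{A}(n,m)$ and applying the dictionary gives $C_{qc}(n,m)=C_{qa}(n,m)$.

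For the converse, assume $\mathcal{A}(n,m)\otimes_{\min}\mathcal{A}(n,m)=\mathcal{A}(n,m)\otimes_{\max}\mathcal{A}(n,m)$ for all $n,m$; the goal is to deduce $C^{\ast}(\mathbb{F}_{\infty})\otimes_{\min}C^{\ast}(\mathbb{F}_{\infty})=C^{\ast}(\mathbb{F}_{\infty})\otimes_{\max}C^{\ast}(\mathbb{F}_{\infty})$, which by Kirchberg's theorem is equivalent to Connes' problem. The free group $\mathbb{F}_{n-1}$ embeds in $\mathbb{Z}_{2}^{\ast n}$ as the index-$2$ normal subgroup cut out by the length-parity homomorphism onto $\mathbb{Z}_{2}$, and averaging the dual $\widehat{\mathbb{Z}_{2}}$-action on $C^{\ast}(\mathbb{Z}_{2}^{\ast n})$ produces a conditional expectation $\mathcal{A}(n,2)\to C^{\ast}(\mathbb{F}_{n-1})$. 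A conditional expectation $E\colon\mathfrak{A}\to\mathfrak{B}$ makes the inclusion $\mathfrak{B}\otimes_{\max}\mathfrak{B}\hookrightarrow\mathfrak{A}\otimes_{\max}\mathfrak{A}$ isometric (it is a $\ast$-homomorphism, hence contractive, and $E\otimes E$ is a unital completely positive left inverse), while $\otimes_{\min}$ is always injective; so coincidence of the two norms on $\mathcal{A}(n,2)\otimes\mathcal{A}(n,2)$ descends to $C^{\ast}(\mathbb{F}_{n-1})\otimes C^{\ast}(\mathbb{F}_{n-1})$, and letting $n\to\infty$ (using the conditional expectations $C^{\ast}(\mathbb{F}_{\infty})\to C^{\ast}(\mathbb{F}_{n-1})$ afforded by free-factor retractions together with injectivity of $\otimes_{\min}$) gives the claim for $\mathbb{F}_{\infty}$.

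The step I expect to cost the most work is the reverse half of the dictionary: a correlation records only the value of a state on the small operator subsystem spanned by $1$ and the elements $E_{x,i}\otimes F_{y,j}$, so one must show that this meager data detects the whole C*-tensor norm, i.e.\ that $C_{qc}(n,m)=C_{qa}(n,m)$ really forces $\mathcal{A}(n,m)\otimes_{\min}\mathcal{A}(n,m)=\mathcal{A}(n,m)\otimes_{\max}\mathcal{A}(n,m)$. This is handled with the operator-system tensor-product machinery of Farenick--Paulsen and Kavruk--Paulsen--Todorov--Tomforde---the universal operator system of $n$ size-$m$ PVMs has C*-envelope $\mathcal{A}(n,m)$, and its commuting and minimal operator-system tensor products compute $C_{qc}$ and $C_{qa}$---together with matrix-amplification arguments realizing matrix-valued states as scalar correlations with more experiments. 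Aside from this bookkeeping, the argument is a translation layer resting on Kirchberg's deep equivalences (Connes' problem $\Leftrightarrow$ the weak expectation property of $C^{\ast}(\mathbb{F}_{\infty})$ $\Leftrightarrow$ ``local lifting property implies weak expectation property'' $\Leftrightarrow$ $C^{\ast}(\mathbb{F}_{\infty})\otimes_{\min}C^{\ast}(\mathbb{F}_{\infty})=C^{\ast}(\mathbb{F}_{\infty})\otimes_{\max}C^{\ast}(\mathbb{F}_{\infty})$), which supply the genuinely hard input and are used here as a black box.
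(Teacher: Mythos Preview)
The paper does not prove this theorem; it is quoted in the preliminaries as a known result of Ozawa and used thereafter as a black box. There is therefore no ``paper's own proof'' to compare your proposal against.

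Your outline is essentially the standard route to Ozawa's theorem, and indeed tracks Ozawa's own argument closely: the identification of $C_{qc}(n,m)$ and $C_{qa}(n,m)$ with state spaces of $\mathcal{A}(n,m)\otimes_{\max}\mathcal{A}(n,m)$ and $\mathcal{A}(n,m)\otimes_{\min}\mathcal{A}(n,m)$ (using residual finite-dimensionality of $\ast^{n}\mathbb{C}^{m}$ for the latter), the appeal to Kirchberg's equivalences for the forward direction, and the descent from $C^{\ast}(\mathbb{Z}_{2}^{\ast n})$ to $C^{\ast}(\mathbb{F}_{n-1})$ via the index-two normal subgroup and the associated conditional expectation are all correct. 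You are also right that the genuinely delicate step is the reverse half of the dictionary---that equality of scalar correlations on the small operator subsystem forces equality of the full C*-tensor norms---and that the operator-system machinery of Farenick--Paulsen and Kavruk--Paulsen--Todorov--Tomforde (or, alternatively, Ozawa's direct matrix-amplification argument trading matrix states for scalar correlations with more inputs) is what makes this go through. Your sketch of that step is accurate as a plan but would need substantial expansion to become a complete proof; in particular, one must check that the relevant operator system enjoys the lifting and rigidity properties that let state-space coincidence propagate to the C*-envelope tensor product.
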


We now turn our attention to the subset of synchronous correlations which we define now.

\begin{definition} A correlation $p \in C(n,m)$ is called \textbf{synchronous} if $p(i,j|x,x) = 0$ whenever $i \neq j$. For each $r \in \{loc, q, qa, qc, ns\}$ we let $C_r^s(n,m)$ denote the set of all synchronous correlations in $C_r(n,m)$. \end{definition}

A state $\tau$ on a C*-algebra $\mathfrak{A}$ is called tracial if $\tau(xy)=\tau(yx)$ for all $x,y \in \mathfrak{A}$. A useful characterization of the synchronous quantum correlations in terms of tracial states was discovered by Paulsen-et.al. in \cite{MR3460238}. We state this result here.

\begin{theorem}[Paulsen et. al.] \label{PaulsenWinter} A correlation $p \in C_{qc}^s(n,m)$ if and only if there exists a C*-algebra $\mathfrak{A}$, projection-valued measures $\{E_{x,i}\} \subseteq \mathfrak{A}$ and a tracial state $\tau$ on $\mathfrak{A}$ such that $p(i,j|x,y) = \tau(E_{x,i} E_{y,j})$. If the C*-algebra is finite-dimensional, then the corresponding correlation $p$ is in $C_q^s(n,m)$. If the C*-algebra is commutative, then the corresponding correlation $p$ is in $C_{loc}^s(n,m)$. \end{theorem}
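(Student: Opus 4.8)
\emph{Plan.} I would prove the two implications of the biconditional separately, and then read off the finite-dimensional and commutative refinements from the construction used for the "if" direction.

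\emph{Sufficiency.} Suppose we are given a C*-algebra $\mathfrak{A}$, projection-valued measures $\{E_{x,i}\} \subseteq \mathfrak{A}$, and a tracial state $\tau$ with $p(i,j|x,y) = \tau(E_{x,i}E_{y,j})$. The plan is to produce a quantum-commuting model by the GNS construction applied to $\tau$. Let $(H,\pi,\xi)$ be the GNS triple, so that $H = L^2(\mathfrak{A},\tau)$ carries the inner product $\langle \hat a, \hat b\rangle = \tau(b^*a)$, $\pi$ is left multiplication, and $\xi = \hat I$ is a unit vector. Because $\tau$ is tracial, right multiplication $R_a\hat b = \widehat{ba}$ extends to bounded operators on $H$ with $R_a^* = R_{a^*}$ and $R_aR_b = R_{ba}$, and every $R_a$ commutes with every $\pi(b)$. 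Take Alice's measurements to be $\{\pi(E_{x,i})\}$ and Bob's to be $\{R_{E_{y,j}}\}$; the latter is a projection-valued measure commuting with Alice's, since $R_{E_{y,j}}^* = R_{E_{y,j}}$, $R_{E_{y,j}}R_{E_{y,j'}} = R_{E_{y,j'}E_{y,j}} = 0$ for $j \neq j'$, and $\sum_j R_{E_{y,j}} = R_I = I$. The vector state at $\xi$ then gives $\langle \pi(E_{x,i})R_{E_{y,j}}\xi,\xi\rangle = \tau(E_{x,i}E_{y,j}) = p(i,j|x,y)$, so $p \in C_{qc}(n,m)$, and synchronicity is immediate from $E_{x,i}E_{x,j}=0$ when $i \neq j$. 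If $\mathfrak{A}$ is finite-dimensional then $H$ is finite-dimensional, so the model lives on the finite-dimensional C*-algebra $B(H)$ and $p \in C_q^s(n,m)$; if $\mathfrak{A}$ is commutative then $R_a = \pi(a)$, so Alice and Bob share a single commuting commutative family and $p \in C_{loc}^s(n,m)$.

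\emph{Necessity.} Suppose $p \in C_{qc}^s(n,m)$, realized by commuting projection-valued measures $\{E_{x,i}\},\{F_{y,j}\}$ in a C*-algebra $\mathfrak{B}$ and a state $\phi$ with $p(i,j|x,y) = \phi(E_{x,i}F_{y,j})$. Pass to the GNS representation $(H,\pi,\xi)$ of $\phi$. The first key step is to exploit synchronicity: since $\phi(E_{x,i}F_{x,j}) = 0$ for $i \neq j$, summing over $j$ yields $\phi(E_{x,i}F_{x,i}) = \phi(E_{x,i})$, and symmetrically $\phi(E_{x,i}F_{x,i}) = \phi(F_{x,i})$. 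Using that $E_{x,i}$ and $F_{x,i}$ are commuting projections, one computes
\[ \|\pi(E_{x,i})\xi - \pi(F_{x,i})\xi\|^2 = \phi(E_{x,i}) - 2\phi(E_{x,i}F_{x,i}) + \phi(F_{x,i}) = 0, \]
so $\pi(E_{x,i})\xi = \pi(F_{x,i})\xi$ for every $x,i$. Now let $\mathfrak{A}$ be the C*-subalgebra of $B(H)$ generated by $\{\pi(E_{x,i})\}$ and let $\tau(a) = \langle a\xi,\xi\rangle$; then $\{\pi(E_{x,i})\}_i$ is a projection-valued measure in $\mathfrak{A}$ and $\tau$ is a state on $\mathfrak{A}$, and it remains to show $\tau$ is tracial and computes $p$.

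\emph{Traciality.} The heart of the argument is to show $\tau$ is tracial, and it suffices to check $\tau(w) = \tau(w')$ when $w = \pi(E_{x_1,i_1})\cdots\pi(E_{x_k,i_k})$ and $w'$ is obtained by cycling the last letter to the front, since such rotations generate all cyclic permutations and the words span a dense $*$-subalgebra of $\mathfrak{A}$. For this I would replace $\pi(E_{x_k,i_k})\xi$ by $\pi(F_{x_k,i_k})\xi$, move the self-adjoint operator $\pi(F_{x_k,i_k})$ past the remaining Alice letters (it commutes with them) and across the inner product, and then replace $\pi(F_{x_k,i_k})\xi$ back by $\pi(E_{x_k,i_k})\xi$; the net effect is exactly the desired cyclic rotation. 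The same device gives $p(i,j|x,y) = \langle \pi(E_{x,i})\xi,\pi(E_{y,j})\xi\rangle = \tau(E_{y,j}E_{x,i}) = \tau(E_{x,i}E_{y,j})$, finishing the direction. The principal obstacle is precisely this traciality computation: without the identification $\pi(E_{x,i})\xi = \pi(F_{x,i})\xi$ supplied by synchronicity, together with the commutation of the two families, there is no reason for the vector state on the algebra generated by Alice's operators alone to be a trace.
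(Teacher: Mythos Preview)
The paper does not supply its own proof of this theorem; it is quoted from the literature (attributed to Paulsen et al.\ with a citation) and used as a black box in the preliminaries section. There is therefore nothing in the paper to compare your argument against line by line.

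That said, your proof is correct and is essentially the standard argument from the original source. The sufficiency direction via the GNS representation of $\tau$, with Alice acting by left multiplication and Bob by right multiplication, is exactly how one exhibits a commuting model; the finite-dimensional and commutative refinements follow as you indicate. For necessity, the key identity $\pi(E_{x,i})\xi = \pi(F_{x,i})\xi$ extracted from synchronicity, followed by the ``swap an $E$ for an $F$, commute, swap back'' trick to obtain cyclic invariance of the vector state on words in the $\pi(E_{x,i})$'s, is precisely the mechanism used in the original proof. One small point worth making explicit in a final write-up: cyclic invariance on monomials yields $\tau(uv)=\tau(vu)$ for all pairs of monomials (since $uv$ and $vu$ are cyclic rotations of one another), and then bilinearity plus continuity extends traciality to all of $\mathfrak{A}$; you allude to this but it deserves one sentence.
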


A state $\phi$ on a C*-algebra is called faithful if $\phi(x^*x) > 0$ whenever $x \neq 0$. A simple application of the GNS construction shows that we may always take the state $\tau$ in Theorem \ref{PaulsenWinter} to be faithful, a fact that we will exploit later.

As in the non-synchronous case, it is well-known that \[ C_{loc}^s(n,m) \subseteq C_q^s(n,m) \subseteq C_{qa}^s(n,m) \subseteq C_{qc}^s(n,m) \subseteq C_{ns}^s(n,m) \subseteq C^s(n,m) \] and that these sets are convex. With execption of $C_q^s(n,m)$ all of these sets are closed, and the sets $C_{loc}^s(n,m)$, $C_{ns}^s(n,m)$ and $C^s(n,m)$ are polytopes. The sets $C_q^s(n,m)$, $C_{qa}^s(n,m)$, and $C_{qc}^s(n,m)$ are not polytopes for most $n$ and $m$. Moreover we have a reformulation of Connes' problem in this setting as well.

\begin{theorem}[Dykema-Paulsen \cite{DykemaPaulsen2016}] Connes' embedding problem has an affirmative answer if and only if the closure of $C_q^s(n,m)$ coincides with $C_{qc}^s(n,m)$ for all $n,m \in \mathbb{N}$. \end{theorem}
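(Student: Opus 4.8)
The plan is to translate both sides into properties of tracial states on a universal C*-algebra and then lean on the standard operator-algebraic forms of Connes' problem. For $n,m\in\mathbb{N}$ let $\mathcal{A}(n,m)$ be the universal unital C*-algebra generated by $n$ projection-valued measures $\{E_{x,i}\}_{i\in[m]}$, $x\in[n]$, each of size $m$; equivalently $\mathcal{A}(n,m)$ is the unital free product $\mathbb{C}^m\ast\cdots\ast\mathbb{C}^m$ of $n$ copies of $\mathbb{C}^m$. By Theorem~\ref{PaulsenWinter}, the affine map $\Phi\colon\tau\mapsto\big(\tau(E_{x,i}E_{y,j})\big)_{x,y\in[n],\,i,j\in[m]}$ sends the weak-$*$ compact convex set $T(\mathcal{A}(n,m))$ of tracial states onto $C_{qc}^s(n,m)$, and sends those tracial states that factor through a finite-dimensional representation onto $C_q^s(n,m)$; moreover $\Phi$ is weak-$*$-to-norm continuous, so $\Phi$ maps the weak-$*$ closure of the finite-dimensional tracial states onto $\overline{C_q^s(n,m)}$. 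Thus ``$\overline{C_q^s(n,m)}=C_{qc}^s(n,m)$'' says precisely that $\Phi$ of the finite-dimensional tracial states is dense in $\Phi(T(\mathcal{A}(n,m)))$; in particular it holds once every tracial state on $\mathcal{A}(n,m)$ is a weak-$*$ limit of finite-dimensional tracial states, and the delicate point for the converse is that $\Phi$ is very far from injective on traces.

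For the implication CEP $\Rightarrow$ coincidence, fix $\tau\in T(\mathcal{A}(n,m))$ and let $(M,\mathrm{tr})$ be the finite von Neumann algebra with separable predual obtained from the GNS construction, carrying the images of the generating PVMs. The version of CEP for $II_1$-factors upgrades in the usual way --- for instance by embedding $M$ trace-preservingly into the $II_1$-factor $M\ast L(\mathbb{F}_2)$ --- to the statement that $M$ admits a trace-preserving embedding into $\mathcal{R}^\omega$. Writing $\mathcal{R}^\omega=\ell^\infty(\mathbb{N},\mathcal{R})/I_\omega$, I would, for each fixed $x$, lift the $m$ orthogonal projections representing $E_{x,0},\dots,E_{x,m-1}$ to a sequence of $m$-tuples of projections in $\mathcal{R}$ summing to $1$, then push each finite family into a matrix subalgebra $M_d\subseteq\mathcal{R}$ using hyperfiniteness; the only technical wrinkle is correcting an ``almost-PVM'' (a near-orthogonal family of projections near-summing to $1$) to an honest PVM, which is routine given the finite-dimensionality of $\mathbb{C}^m$ and the fact that nearby projections in a finite von Neumann algebra are unitarily conjugate. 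The normalized traces of the resulting products converge along $\omega$ to $\tau(E_{x,i}E_{y,j})$, so $\Phi(\tau)\in\overline{C_q^s(n,m)}$; since $C_{qc}^s(n,m)$ is closed and contains $C_q^s(n,m)$, equality follows.

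For the converse I would invoke Ozawa's theorem, reducing to the assertion that the synchronous coincidence (for all $n,m$) implies $C_{qa}(n,m)=C_{qc}(n,m)$ for all $n,m$. Given $p\in C_{qc}(n,m)$, realized by commuting PVMs $\{E_{x,i}\},\{F_{y,j}\}$ in a C*-algebra $\mathcal{C}$ with a state $\phi$, the idea is to build a synchronous correlation $\tilde p$ in a larger range $(N,M)$ whose entries $\tilde p(i,j|z,w)=\tau(G_{z,i}G_{w,j})$ --- for an honest tracial state $\tau$ on $\mathcal{A}(N,M)$ --- include the numbers $p(i,j|x,y)$ and are arranged so that any finite-dimensional synchronous correlation close to $\tilde p$ restricts to a finite-dimensional (hence quantum) correlation close to $p$. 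Applying the hypothesis to $\tilde p\in C_{qc}^s(N,M)=\overline{C_q^s(N,M)}$ and reading off the relevant entries then places $p$ in $\overline{C_q(n,m)}=C_{qa}(n,m)$.

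I expect the construction of $\tilde p$ to be the main obstacle. The essential difficulty is that $\phi$ need not be tracial and the bipartite commutation pattern $E_{x,i}F_{y,j}=F_{y,j}E_{x,i}$ must be re-encoded inside a single tracial system: one has to enlarge the family of measurements --- adjoining auxiliary PVMs whose degree-two correlations with the $E$'s and $F$'s pin down exactly the data of $p$ --- while certifying both that the enlarged system still carries a trace and that finite-dimensional approximants of its correlation respect the encoding closely enough to return a genuine element of $C_q(n,m)$ in the limit. This bookkeeping, rather than any conceptual novelty, is where the real work lies; an alternative route phrases the same obstacle in terms of promoting the degree-two approximation supplied by the synchronous hypothesis, along the chain $C^*(\mathbb{F}_\infty)\hookrightarrow C^*(\mathbb{Z}_2\ast\cdots\ast\mathbb{Z}_2)\cong\mathcal{A}(k,2)$, to Kirchberg's condition that every tracial state on $C^*(\mathbb{F}_\infty)$ is a weak-$*$ limit of finite-dimensional ones.
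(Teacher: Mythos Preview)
The paper does not prove this theorem; it is quoted from \cite{DykemaPaulsen2016} as background in the preliminaries, with no proof supplied. So there is no in-paper proof to compare your attempt against, and I can only assess the proposal on its own merits and against the original Dykema--Paulsen argument.

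Your forward direction (CEP $\Rightarrow$ coincidence) is essentially correct and standard: passing to the GNS von Neumann algebra, embedding trace-preservingly into $\mathcal{R}^\omega$, lifting each PVM along the quotient $\ell^\infty(\mathbb{N},\mathcal{R})\to\mathcal{R}^\omega$, and then using hyperfiniteness to push into matrix subalgebras is exactly the right machinery, and the correction of an almost-PVM to an honest PVM is indeed routine semiprojectivity of $\mathbb{C}^m$.

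The converse, however, has a genuine gap. You correctly isolate the difficulty --- encoding an arbitrary bipartite, non-tracial $p\in C_{qc}(n,m)$ inside a synchronous $\tilde p\in C_{qc}^s(N,M)$ so that finite-dimensional approximants of $\tilde p$ restrict to quantum approximants of $p$ --- but you do not construct $\tilde p$, and you say as much. That construction is the whole content of the hard direction; without it, what you have is a plan rather than a proof. The alternative you sketch at the end, via $C^*(\mathbb{F}_\infty)\hookrightarrow\mathcal{A}(k,2)$ and Kirchberg's tracial reformulation of CEP, is in fact much closer to how Dykema--Paulsen actually argue: they avoid the non-synchronous sets entirely and instead show that degree-$2$ tracial-moment approximation for \emph{all} $n,m$ forces approximation of arbitrary $*$-moments, by encoding a word $\tau(P_{i_1}\cdots P_{i_k})$ as degree-$2$ data in a larger free product through auxiliary projections. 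That moment-reduction step is the missing ingredient in your write-up, and it is where the actual work in \cite{DykemaPaulsen2016} lies.
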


In \cite{DykemaPaulsen2016} Dykema-Paulsen ask whether or not the closure of $C_q^s(n,m)$ coincides with $C_{qa}^s(n,m)$. Their question was answered affirmatively by Kim-Paulsen-Schafhauser in \cite{KimPaulsenSchafhauser2018}. The proof uncovered another interesting connection between Connes' problem and synchronous correlation sets, which we summarize in the next theorem.

\begin{theorem}[Kim-Paulsen-Schafhauser \cite{KimPaulsenSchafhauser2018}] The set $C_{qa}^s(n,m)$ is equal to the closure of $C_q^s(n,m)$ for each $n,m \in \mathbb{N}$. Moreover, a correlation $p \in C_{qa}^s(n,m)$ if and only if there exist projection-valued measures $\{ E_{x,i}\}_{i \in [m]} \subseteq \mathcal{R}^\omega$ for each $x \in [n]$ such that $p(i,j|x,y) = \tau(E_{x,i} E_{y,j})$, where $\tau$ is the unique trace on $\mathcal{R}^\omega$. \end{theorem}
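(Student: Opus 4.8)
The plan is to prove the chain of inclusions
\[
\overline{C_q^s(n,m)} \;\subseteq\; C_{qa}^s(n,m) \;\subseteq\; \mathcal{P}(n,m) \;\subseteq\; \overline{C_q^s(n,m)},
\]
where $\mathcal{P}(n,m)$ denotes the set of correlations admitting a representation $p(i,j|x,y)=\tau(E_{x,i}E_{y,j})$ with $\{E_{x,i}\}_{i}\subseteq\mathcal{R}^\omega$ projection-valued measures and $\tau$ the unique trace on $\mathcal{R}^\omega$; all four sets then coincide, which is precisely the two assertions of the theorem. The first inclusion is immediate: $C_q^s\subseteq C_q\subseteq C_{qa}$, the set $C_{qa}(n,m)$ is closed, and the synchronicity constraints $p(i,j|x,x)=0$ for $i\neq j$ are closed linear conditions, hence preserved under limits.

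For the inclusion $\mathcal{P}(n,m)\subseteq\overline{C_q^s(n,m)}$ I would write $\mathcal{R}^\omega=\prod_\omega\mathcal{R}$ as a tracial ultrapower and lift a given family of PVMs to sequences $(E_{x,i}^{(k)})_k$ of elements of $\mathcal{R}$. Because $\|(E_{x,i}^{(k)})^*-E_{x,i}^{(k)}\|_2\to_\omega 0$ and $\|(E_{x,i}^{(k)})^2-E_{x,i}^{(k)}\|_2\to_\omega 0$, spectral calculus replaces each $E_{x,i}^{(k)}$ by a genuine projection, and a finite Gram--Schmidt-type perturbation then replaces, for each fixed $x$, the family $\{E_{x,i}^{(k)}\}_{i}$ by an honest projection-valued measure without disturbing the relevant $\omega$-limits. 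Since $\mathcal{R}$ is hyperfinite, these PVMs are in turn $\|\cdot\|_2$-approximable by PVMs inside a matrix subalgebra $M_{2^N}$, whose normalized trace produces a correlation in $C_q^s(n,m)$ arbitrarily close to $p$; hence $p\in\overline{C_q^s(n,m)}$. The same computation also confirms that every element of $\mathcal{P}(n,m)$ is synchronous, since $E_{x,i}E_{x,j}=0$ forces $\tau(E_{x,i}E_{x,j})=0$.

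The content of the theorem is the middle inclusion $C_{qa}^s(n,m)\subseteq\mathcal{P}(n,m)$. Given $p\in C_{qa}^s(n,m)$, choose $p^{(k)}\to p$ with $p^{(k)}\in C_q(n,m)$ realized on $M_{d_k}\otimes M_{d_k}$ by commuting PVMs $E_{x,i}^{(k)}\otimes I$, $I\otimes F_{y,j}^{(k)}$ and a unit vector $\xi_k$ --- note that the $p^{(k)}$ themselves need not be synchronous. Let $\mathfrak{A}$ be the universal C*-algebra generated by $n$ projection-valued measures of size $m$, with $*$-homomorphisms $\pi_k^A,\pi_k^B:\mathfrak{A}\to M_{d_k}$ carrying the $x$-th generating PVM to $\{E_{x,i}^{(k)}\}_{i}$ and $\{F_{x,i}^{(k)}\}_{i}$, and set $\tau_k:=\langle\xi_k,(\pi_k^A(\cdot)\otimes I)\xi_k\rangle$, a state on $\mathfrak{A}$ satisfying $\tau_k(a)=\Tr(\pi_k^A(a)\rho_k)$ with $\rho_k$ the reduced density matrix of $\xi_k$ on the first tensor leg. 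Synchronicity of the \emph{limit} forces the $\tau_k$ to be asymptotically tracial: non-signaling gives $p_A(i|x)=p_B(i|x)=p(i,i|x,x)$, so expanding $\|(E_{x,i}^{(k)}\otimes I)\xi_k-(I\otimes F_{x,i}^{(k)})\xi_k\|^2$ shows this quantity tends to $0$; since all operators involved are contractions, for every word $a$ the vectors $(\pi_k^A(a)\otimes I)\xi_k$ and $(I\otimes\pi_k^B(\tilde a))\xi_k$ become interchangeable, where $\tilde a$ denotes $a$ read in reverse order, and commuting the two tensor legs past one another yields both $|\tau_k(ab)-\tau_k(ba)|\to_\omega 0$ for all $a,b\in\mathfrak{A}$ and $\tau_k(e_{x,i}e_{y,j})\to p(i,j|x,y)$. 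It then remains to show that the limit trace $\tau=\lim_\omega\tau_k$ --- a tracial state on $\mathfrak{A}$ arising as an ultralimit of asymptotically tracial states, each implemented at finite stage through a \emph{finite-dimensional} representation --- is amenable, i.e.\ that $\pi_\tau(\mathfrak{A})''$ embeds trace-preservingly into $\mathcal{R}^\omega$. Pulling the generating PVMs back along such an embedding exhibits $p\in\mathcal{P}(n,m)$, and combined with the preceding paragraph this also gives $C_{qa}^s(n,m)\subseteq\overline{C_q^s(n,m)}$, closing the circle.

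I expect this last step --- upgrading ``asymptotically tracial and finite-dimensionally implemented'' to ``amenable and realizable inside $\mathcal{R}^\omega$'' --- to be the main obstacle, and the only point where the hypothesis $p\in C_{qa}^s(n,m)$, rather than merely $p\in C_{qc}^s(n,m)$ (where the analogous membership statement is equivalent to Connes' problem by the Dykema--Paulsen theorem), is used in an essential way. Concretely one must manufacture matricial microstates for $\tau$ out of the finite-dimensional data for the non-tracial states $\tau_k$; a naive replacement of the vector states by matrix traces fails, because a vector state on a finite-dimensional C*-algebra is not a trace. The natural device is the trace-preserving conditional expectation $E_k:M_{d_k}\to\pi_k^A(\mathfrak{A})$: one then has $\tau_k(a)=\Tr(\pi_k^A(a)\,E_k(\rho_k))$ with $E_k(\rho_k)$ now lying in the finite-dimensional algebra $\pi_k^A(\mathfrak{A})$, asymptotic traciality of $\tau_k$ translates into approximate centrality of $E_k(\rho_k)$ there, and averaging over the unitaries of each matrix block replaces $E_k(\rho_k)$ by a scalar on each block --- presenting $\tau_k$ as, up to small error, a convex combination of normalized block traces precomposed with $\pi_k^A$, each of which is an honest finite-dimensional trace on $\mathfrak{A}$ whose GNS algebra embeds trace-preservingly into $\mathcal{R}$. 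The delicate point, and where I expect the real work to lie, is uniformity: asymptotic traciality is naturally a statement about each fixed word, whereas controlling the error $\|E_k(\rho_k)-(\text{block averages})\|_1$ requires traciality uniform over the (uncontrolled) unit ball of $\pi_k^A(\mathfrak{A})$, so the argument has to be arranged --- enlarging the tested sets of words in tandem with the stage $k$ and diagonalizing against the ultrafilter --- so that enough traciality is available at each stage.
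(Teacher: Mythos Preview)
The paper does not prove this theorem. It is stated in the Preliminaries section as a result quoted from \cite{KimPaulsenSchafhauser2018} and is used without proof throughout, so there is no ``paper's own proof'' to compare your proposal against.

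For what it is worth, your outline is broadly in the spirit of the original Kim--Paulsen--Schafhauser argument. The easy inclusions are handled correctly, and you have correctly located the content in the step $C_{qa}^s(n,m)\subseteq\mathcal{P}(n,m)$: one must show that the tracial state on the universal PVM algebra arising as an ultralimit of the (non-tracial, but asymptotically tracial) vector states $\tau_k$ is \emph{amenable}, so that its GNS completion embeds trace-preservingly into $\mathcal{R}^\omega$. Your derivation of asymptotic traciality from synchronicity of the limit via $\|(E_{x,i}^{(k)}\otimes I)\xi_k-(I\otimes F_{x,i}^{(k)})\xi_k\|^2\to 0$ is the standard route and is correct.

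You have also correctly identified the genuine gap in your own sketch: the block-averaging device you propose requires approximate centrality of $E_k(\rho_k)$ \emph{uniformly} over the unit ball of $\pi_k^A(\mathfrak{A})$, whereas asymptotic traciality only gives it pointwise on fixed words. In the actual Kim--Paulsen--Schafhauser paper this is handled not by your block-averaging route but by appealing to N.\ Brown's characterization of amenable traces as those admitting, for each finite subset and each $\epsilon>0$, a u.c.p.\ map into a matrix algebra which is approximately multiplicative and approximately trace-preserving in $2$-norm; the finite-dimensional representations $\pi_k^A$ together with asymptotic traciality on progressively larger finite generating sets supply exactly such maps after a diagonalization. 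So your instinct about the fix (enlarge the tested sets in tandem with $k$ and diagonalize against $\omega$) is right, but the cleanest packaging routes through the amenable-trace characterization rather than through explicit control of $\|E_k(\rho_k)-(\text{block averages})\|_1$. As written, your proposal is an accurate roadmap with the hard step honestly flagged as incomplete.
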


We conclude with a discussion about the synchronous correlation sets in the two-outcome senario. Assume $p \in C_{qc}^s(n,2)$. Then for each $x,y \in [n]$ the synchronous and non-signaling conditions imply that \begin{equation} \label{eqn1} p(i,j|x,x) = \begin{pmatrix} w_{x,x} & 0 \\ 0 & 1-w_{x,x} \end{pmatrix} \end{equation} and \begin{equation} \label{eqn2} p(i,j|x,y) = \begin{pmatrix} w_{x,y} & w_{x,x} - w_{x,y} \\ w_{y,y} - w_{x,y} & 1 + w_{x,y} - w_{x,x} - w_{y,y} \end{pmatrix} \end{equation} where $w_{x,y} = p(0,0|x,y)$. Since $p \in C_{qc}^s(n,2)$ there exists a $C^*$-algebra $\mathfrak{A}$, projection valued measures $\{E_{x,i}\} \subseteq \mathfrak{A}$ and a tracial state $\tau$ on $\mathfrak{A}$ such that $p(i,j|x,y) = \tau(E_{x,i} E_{y,j})$. Therefore $w_{x,y} = w_{y,x}$. It follows that the data of $p$ is contained in the symmetric matrix $(w_{x,y})$. Following the notation of \cite{Musat2019}, we define $D_{qc}(n)$ to be the set of symmetric matrices with entries of the form $d_{i,j} = \tau(P_i P_j)$ where $\{P_i\} \subseteq \mathfrak{A}$ are projections and $\tau$ is a tracial state on $\mathfrak{A}$. Equivalently $D_{qc}(n)$ is the image of the restriction map $\pi: C_{qc}(n,2) \rightarrow \mathbb{R}^{n^2}$ given by $\pi(p)(x,y) = p(0,0|x,y)$. Similarly we define $D_{qa}(n)$, $D_{q}(n)$ and $D_{loc}(n)$. By Equations (\ref{eqn1}) and (\ref{eqn2}) above it is evident that $\pi$ is an affine isomorphism between $C_r(n,2)$ and $D_r(n)$. We summarize this discussion with the following Proposition.

\begin{proposition} \label{bijectionProp2} The restriction map $\pi: C_r(n,2) \rightarrow \mathbb{R}^{n^2}$ given by $\pi(p)(x,y) = p(0,0|x,y)$ is an affine bijection onto its range $D_r(n)$ for each $r \in \{loc, q, qa, qc, ns\}$. \end{proposition}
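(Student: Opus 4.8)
The plan is to read off an explicit affine inverse of $\pi$ from Equations (\ref{eqn1}) and (\ref{eqn2}). First note that $\pi$ is merely the restriction to the synchronous set $C_r^s(n,2)$ of the linear coordinate projection $\mathbb{R}^{n^2m^2}\to\mathbb{R}^{n^2}$ recording the entries $p(0,0|x,y)$; it is thus affine and continuous, so the only real content of the statement is injectivity together with the identification of the range with $D_r(n)$.

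For injectivity, observe that the derivation of Equations (\ref{eqn1}) and (\ref{eqn2}) used only synchronicity and the non-signaling conditions, so those equations hold verbatim for every $p\in C_{ns}^s(n,2)$, hence for every $p$ in each of the smaller sets. They say precisely that the affine map $\sigma\colon\mathbb{R}^{n^2}\to\mathbb{R}^{n^2m^2}$ sending a matrix $(w_{x,y})$ to the tensor whose $(x,x)$-block is $\operatorname{diag}(w_{x,x},\,1-w_{x,x})$ and whose $(x,y)$-block, for $x\neq y$, is the $2\times 2$ matrix appearing in (\ref{eqn2}) satisfies $\sigma(\pi(p))=p$ for every $p\in C_{ns}^s(n,2)$. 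This exhibits an affine left inverse of $\pi$ on $C_r^s(n,2)$, so $\pi$ is injective; and since by construction $\pi(\sigma(w))=w$ whenever $\sigma(w)$ is a correlation, $\sigma$ restricts to a two-sided affine inverse $D_r(n)\to C_r^s(n,2)$ once the range of $\pi$ is known to be $D_r(n)$.

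It remains to identify the range. For $r\in\{qa,ns\}$ one takes $D_r(n)$ to be, by definition, the image $\pi(C_r^s(n,2))$, so there is nothing to check (and then $D_{qa}(n)=\overline{D_q(n)}$ follows, since $\pi$ is a homeomorphism onto $D_{ns}(n)$, the ambient correlation set is closed, and $\overline{C_q^s(n,2)}=C_{qa}^s(n,2)$ by Kim-Paulsen-Schafhauser). For $r=qc$, the equality $\pi(C_{qc}^s(n,2))=D_{qc}(n)$ is a repackaging of Theorem \ref{PaulsenWinter}: if $p\in C_{qc}^s(n,2)$, pick a C*-algebra $\mathfrak{A}$, projection-valued measures $\{E_{x,i}\}$ and a tracial state $\tau$ with $p(i,j|x,y)=\tau(E_{x,i}E_{y,j})$; since $m=2$ each $E_{x,0}$ is a projection and $\pi(p)(x,y)=\tau(E_{x,0}E_{y,0})\in D_{qc}(n)$. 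Conversely, given a symmetric matrix $(d_{x,y})$ with $d_{x,y}=\tau(P_xP_y)$ for projections $\{P_x\}\subseteq\mathfrak{A}$ and a tracial state $\tau$, set $E_{x,0}=P_x$ and $E_{x,1}=I-P_x$; these are projection-valued measures, $p(i,j|x,y):=\tau(E_{x,i}E_{y,j})$ lies in $C_{qc}^s(n,2)$ by Theorem \ref{PaulsenWinter}, and $\pi(p)=(d_{x,y})$. The same argument, with $\mathfrak{A}$ required to be finite-dimensional (resp. commutative), handles $r=q$ (resp. $r=loc$).

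I do not expect a serious obstacle; the proposition is essentially bookkeeping. The point deserving care is that Equations (\ref{eqn1}) and (\ref{eqn2}) are being invoked for the entire non-signaling class rather than only for $C_{qc}^s(n,2)$: this is justified because their proof uses only synchronicity and non-signaling, the trace having been used earlier only to record the incidental symmetry $w_{x,y}=w_{y,x}$, which the inversion formulas do not need. A secondary, minor point is to match the two descriptions of $D_r(n)$ for $r\in\{loc,q,qc\}$ via the correspondence $P\leftrightarrow\{P,\,I-P\}$ between projections and two-element projection-valued measures, which is exactly what the computation above does.
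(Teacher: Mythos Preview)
Your proposal is correct and follows the same approach as the paper: the paper does not give a separate proof but instead states the proposition as a summary of the preceding discussion, which derives Equations~(\ref{eqn1}) and~(\ref{eqn2}) from the synchronous and non-signaling conditions and observes that these equations make $p$ a function of $(w_{x,y})$. Your argument simply writes out explicitly the affine inverse $\sigma$ implicit in those equations and then matches the two descriptions of $D_r(n)$ via the correspondence $P\leftrightarrow\{P,\,I-P\}$, exactly as the paper's discussion indicates; you are also right to note that the derivation of (\ref{eqn1})--(\ref{eqn2}) uses only synchronicity and non-signaling, so the inverse works on all of $C_{ns}^s(n,2)$, and that $D_{ns}(n)$ (and $D_{qa}(n)$) should be read as defined by the image of $\pi$, since the paper only explicitly defines $D_r(n)$ for $r\in\{loc,q,qc\}$.
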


\section{Connes' embedding problem and two-outcome correlation sets}

In this section we wish to establish an equivalence between Connes' problem and the weak Tsirelson problem in the setting of two-outcome synchronous correlation sets. Our strategy will be to identify certain convex faces of the two-outcome synchronous correlation sets which are affine images of the $m$-outcome synchronous correlation sets. We will make use of the following Lemma.

\begin{lemma} \label{PVMLemma} Let $\mathfrak{A}$ be a $C^*$-algebra with a faithful tracial state $\tau: \mathfrak{A} \rightarrow \mathbb{C}$. Suppose that $\{P_i\}_{i=1}^m$ are projections in $\mathfrak{A}$ satisfying $\tau(P_i P_j) = 0$ when $i \neq j$. Then $P_i P_j = 0$ whenever $i \neq j$. If in addition $\sum_{i,j} \tau(P_i P_j) = 1$, then $\{P_i\}_{i=1}^m$ is a projection valued measure. \end{lemma}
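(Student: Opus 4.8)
The plan is to exploit faithfulness of $\tau$ to promote the vanishing of $\tau(P_iP_j)$ to the vanishing of the operator $P_iP_j$ itself. First I would fix $i \neq j$ and consider the positive element $P_jP_iP_j = (P_iP_j)^*(P_iP_j)$. Since $\tau$ is tracial, $\tau(P_jP_iP_j) = \tau(P_iP_j^2) = \tau(P_iP_j) = 0$. But $P_jP_iP_j \geq 0$, so writing it as $y^*y$ with $y = P_iP_j$ (or directly as $P_jP_iP_j = (P_iP_j)^*(P_iP_j)$) and invoking faithfulness of $\tau$ gives $P_iP_j = 0$. This handles the first assertion; the only mild point to be careful about is making sure the element one feeds to faithfulness is literally of the form $x^*x$, which $P_jP_iP_j$ is since $P_i = P_i^* = P_i^2$ and $P_j = P_j^*$, so $P_jP_iP_j = P_jP_i^*P_iP_j = (P_iP_j)^*(P_iP_j)$.

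Next, assuming additionally that $\sum_{i,j}\tau(P_iP_j) = 1$, I want to show $\sum_i P_i = I$. From the first part we already know $P_iP_j = 0$ for $i \neq j$, so $Q := \sum_i P_i$ satisfies $Q^2 = \sum_{i,j} P_iP_j = \sum_i P_i^2 = \sum_i P_i = Q$, and $Q^* = Q$; thus $Q$ is a projection. Moreover $\tau(Q) = \sum_i \tau(P_i) = \sum_i \tau(P_i^2) = \sum_i \tau(P_iP_i) = \sum_{i,j}\tau(P_iP_j) = 1$, using that the off-diagonal terms vanish. So it remains to show that a projection $Q$ with $\tau(Q) = 1$ must equal $I$ when $\tau$ is faithful.

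For that final step, consider $I - Q$, which is again a projection, hence $(I-Q) = (I-Q)^*(I-Q)$, and $\tau(I - Q) = \tau(I) - \tau(Q) = 1 - 1 = 0$. By faithfulness of $\tau$ this forces $I - Q = 0$, i.e. $Q = \sum_i P_i = I$. Combined with the pairwise orthogonality established in the first part, $\{P_i\}_{i=1}^m$ is a projection-valued measure, as claimed.

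The proof is essentially a short chain of elementary observations, so there is no serious obstacle; the one place demanding a little care is the repeated, correct invocation of faithfulness, namely always exhibiting the relevant positive element explicitly in the form $x^*x$ (here $P_jP_iP_j$ and $I-Q$) before concluding it is zero. Everything else is routine manipulation with projections and the trace property.
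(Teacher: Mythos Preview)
Your proof is correct and follows essentially the same approach as the paper's: use the trace property to write $\tau(P_iP_j)=\tau((P_iP_j)^*(P_iP_j))$ and invoke faithfulness to get $P_iP_j=0$, then show $Q=\sum_i P_i$ is a projection with $\tau(I-Q)=0$ and apply faithfulness again. Your write-up is slightly more explicit about verifying $Q^2=Q$ and exhibiting $I-Q$ as $x^*x$, but the argument is the same.
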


\begin{proof} Since the $P_i$'s are projections and $\tau$ is a trace, we have \[ \tau(P_i P_j) = \tau(P_j P_i P_i P_j) = \tau((P_i P_j)^* (P_i P_j)). \] Since $\tau$ is faithful, it follows that $P_i P_j = 0$ whenever $i \neq j$. Consequently $P := \sum_i P_i$ is a projection. If \[ 1 = \sum_{i,j} \tau(P_i P_j) = \tau(P). \] then $\tau(I - P) = 0$. Since $P \leq I$, $I - P$ is also a projection and therefore $I - P = 0$ since $\tau$ is faithful. \end{proof}

\begin{definition} \label{defn} Let $n,m \in \mathbb{N}$, and fix some bijection $[n] \times [m] \rightarrow [nm]$ so that elements of $[nm]$ are indexed as ordered pairs $(x,i)$ with $x \in [n]$ and $i \in [m]$. Define a map $\pi: C(nm,2) \rightarrow \mathbb{R}^{n^2m^2}$ via $\pi(p)(i,j|x,y) := p(0,0|(x,i),(y,j))$. Then we define \[ F_r^s(n,m) := \{ p \in C_r^s(nm,2) : \pi(p) \in C_{ns}^s(n,m) \}. \] \end{definition}

We remark that our definition of $F_r^s(n,m)$ depends on the labeling of $[nm]$. However it should be transparent that all such choices lead to affinely isomorphic subsets of $C_r^s(nm,2)$ so that this technicality is unimportant. Moreover, the map $\pi$ in this definition is the same $\pi$ from Proposition \ref{bijectionProp2}, making $F_r^s(n,m)$ the preimage of an affine slice of $D_r(nm)$. The following results establish the basic properties of $F_r^s(n,m)$ and the existence of an affine isomorphism with $C_r^s(n,m)$ for $r \in \{loc, q, qa, qc\}$.

\begin{proposition} \label{BijectionProp} For each $n,m \in \mathbb{N}$, $p \in F_r^s(n,m)$ if and only if $\pi(p) \in C_r^s(n,m)$ for each $r \in \{loc,q,qa,qc, ns\}$. \end{proposition}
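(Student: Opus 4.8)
The plan is to move back and forth between the tracial $C^*$-algebraic representations guaranteed by Theorem \ref{PaulsenWinter} (for $r \in \{loc, q, qc\}$) and by the Kim-Paulsen-Schafhauser theorem (for $r = qa$, where one works inside $\mathcal{R}^\omega$ with its unique, faithful trace), using Lemma \ref{PVMLemma} as the bridge. Throughout, as permitted by the remark following Theorem \ref{PaulsenWinter}, I take the tracial state to be faithful; passing to $\mathfrak{A}/\ker\tau$ does not disturb finite-dimensionality or commutativity, and the trace on $\mathcal{R}^\omega$ is faithful to begin with.

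$(\Rightarrow)$ Suppose $p \in F_r^s(n,m)$, so that $p \in C_r^s(nm,2)$ and $\pi(p) \in C_{ns}^s(n,m)$; the case $r = ns$ is then the definition, so assume $r \in \{loc, q, qa, qc\}$. Choose a faithful tracial state $\tau$ on a $C^*$-algebra $\mathfrak{A}$ (finite-dimensional if $r = q$, commutative if $r = loc$, and $\mathcal{R}^\omega$ if $r = qa$) and projection-valued measures $\{E_{(x,i),a}\}_{a \in [2]}$ with $p(a,b|(x,i),(y,j)) = \tau(E_{(x,i),a} E_{(y,j),b})$. Set $P_{x,i} := E_{(x,i),0}$, a projection, so $\pi(p)(i,j|x,y) = p(0,0|(x,i),(y,j)) = \tau(P_{x,i} P_{y,j})$. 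Since $\pi(p)$ is a synchronous correlation, $\tau(P_{x,i} P_{x,j}) = \pi(p)(i,j|x,x) = 0$ for $i \neq j$ and $\sum_{i,j \in [m]} \tau(P_{x,i} P_{x,j}) = \sum_{i,j \in [m]} \pi(p)(i,j|x,x) = 1$; hence Lemma \ref{PVMLemma} makes $\{P_{x,i}\}_{i \in [m]}$ a projection-valued measure for each $x \in [n]$. Now $\pi(p)(i,j|x,y) = \tau(P_{x,i} P_{y,j})$ is precisely the shape required by Theorem \ref{PaulsenWinter} (or its $\mathcal{R}^\omega$ refinement when $r = qa$), so $\pi(p) \in C_r^s(n,m)$, using that $\mathfrak{A}$ is finite-dimensional for $r = q$ and commutative for $r = loc$.

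$(\Leftarrow)$ Suppose $\pi(p) \in C_r^s(n,m) \subseteq C_{ns}^s(n,m)$. It remains to show $p \in C_r^s(nm,2)$; since $p$ is a non-signaling synchronous two-outcome correlation, Equations (\ref{eqn1}) and (\ref{eqn2}) (with $nm$ in place of $n$) recover all of $p$ from $\pi(p)$, so it is enough to exhibit one correlation $q \in C_r^s(nm,2)$ with $\pi(q) = \pi(p)$, for then $p = q$. Take a representation $\pi(p)(i,j|x,y) = \tau(Q_{x,i} Q_{y,j})$ with $\{Q_{x,i}\}_{i \in [m]}$ projection-valued measures in a $C^*$-algebra of the appropriate type and $\tau$ a tracial state, put $E_{(x,i),0} := Q_{x,i}$ and $E_{(x,i),1} := I - Q_{x,i}$ (a two-outcome projection-valued measure for each $(x,i) \in [nm]$), and define $q(a,b|(x,i),(y,j)) := \tau(E_{(x,i),a} E_{(y,j),b})$. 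Then $q$ is synchronous because $E_{(x,i),0} E_{(x,i),1} = 0$, Theorem \ref{PaulsenWinter} (or its $\mathcal{R}^\omega$ version) places $q$ in $C_r^s(nm,2)$, and $\pi(q)(i,j|x,y) = \tau(Q_{x,i} Q_{y,j}) = \pi(p)(i,j|x,y)$; since $\pi(q) \in C_{ns}^s(n,m)$ this also gives $q \in F_r^s(n,m)$, and $p = q$ gives the claim.

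I expect the only real obstacle to be in $(\Rightarrow)$: noticing that the ``blockwise'' synchronicity and normalization of $\pi(p)$ are exactly the hypotheses of Lemma \ref{PVMLemma}, so that the first projections $\{P_{x,i}\}_i$ of the two-outcome measures automatically assemble into genuine $m$-outcome projection-valued measures, and making sure the passage to a faithful trace stays within the correct class of $C^*$-algebras, in particular invoking Kim-Paulsen-Schafhauser in the $qa$ case where there is no finite-dimensional model. The remaining verifications are routine.
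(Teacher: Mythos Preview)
Your proof is correct and follows essentially the same route as the paper's: in both directions you pass through the tracial representation of Theorem \ref{PaulsenWinter} (or the $\mathcal{R}^\omega$ version from Kim--Paulsen--Schafhauser for $r=qa$), using Lemma \ref{PVMLemma} in $(\Rightarrow)$ to upgrade the first projections $E_{(x,i),0}$ to an $m$-outcome projection-valued measure, and in $(\Leftarrow)$ building the two-outcome measures as $Q_{x,i}$ and $I-Q_{x,i}$. You are in fact slightly more explicit than the paper in $(\Leftarrow)$, where you invoke the injectivity of $\pi$ (via Equations (\ref{eqn1})--(\ref{eqn2})) to identify the constructed correlation with the original $p$; the paper leaves this step implicit.
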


\begin{proof} The case when $r=ns$ is obvious by Definition \ref{defn} (that $\pi^{-1}$ is a well-defined affine map follows from Proposition \ref{bijectionProp2}). We consider the other cases. Let $\mathfrak{A}$ be a $C^*$-algebra with a faithful tracial state $\tau$ and let $\{E_{a,(x,i)} \}_{a=0,1} \subseteq \mathfrak{A}$ be projection-valued measures for each $(x,i) \in [n] \times [m]$. Set $p(a,b|(x,i),(y,j)) = \tau(E_{a,(x,i)} E_{b,(y,j)})$. If $p \in F_{qc}^s(n,m)$ then $\pi(p) \in C_{ns}^s(n,m)$. Consequently $\tau(E_{0,(x,i)} E_{0,(x,j)}) = 0$ whenever $i \neq j$ and $\sum_{i,j} \tau(E_{0,(x,i)} E_{0,(x,j)}) = 1$. By Lemma \ref{PVMLemma} it follows that $\{E_{0,(x,i)}\}_{i=1}^m$ is a projection valued measure for each $x \in [n]$. Therefore $\pi(p) \in C_{qc}^s(n,m)$. Taking $\mathfrak{A}$ to be $\mathcal{R}^\omega$, finite-dimensional, or commutative and repeating the above argument we see that $p \in F_{r}^s(n,m)$ implies that $\pi(p) \in C_{r}^s(n,m)$ for $r \in \{ loc, q, qa\}$.

On the other hand, suppose $q \in C_{qc}^s(n,m)$ with $q(i,j|x,y) = \tau(F_{x,i} F_{y,j})$. Then we can build $p \in C_{qc}^s(nm,2)$ by setting $E_{0,(x,i)} = F_{x,i}$ and $E_{1,(x,i)} = I - F_{x,i}$ and letting $p(a,b|(x,i),(y,j)) = \tau(E_{a,(x,i)} E_{b,(y,j)})$. By the non-signaling conditions, we see that $q = \pi(p)$. Taking $\mathfrak{A}$ to be $\mathcal{R}^\omega$, finite-dimensional or commutative we obtain the other results. \end{proof}

\begin{proposition} \label{FaceProp} For each $n,m \in \mathbb{N}$, $F_r^s(n,m)$ is a relatively closed face of $C_r^s(nm,2)$ for each $r \in \{loc,q,qa,qc\}$. \end{proposition}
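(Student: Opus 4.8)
The plan is to verify the two requirements of being a relatively closed face separately: first that $F_r^s(n,m)$ is relatively closed in $C_r^s(nm,2)$, then that it is a face (i.e.\ that if a point of $F_r^s(n,m)$ lies in the interior of a segment in $C_r^s(nm,2)$, then both endpoints of that segment lie in $F_r^s(n,m)$).

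For the relatively closed claim: by Definition \ref{defn}, $F_r^s(n,m) = C_r^s(nm,2) \cap \pi^{-1}(C_{ns}^s(n,m))$, where $\pi$ is the affine (hence continuous) restriction map of Proposition \ref{bijectionProp2}. Since $C_{ns}^s(n,m)$ is a closed set (it is a polytope), $\pi^{-1}(C_{ns}^s(n,m))$ is closed in $\mathbb{R}^{n^2m^2}$, and so its intersection with $C_r^s(nm,2)$ is relatively closed. (When $r=q$, ``relatively closed'' is taken in $C_q^s(nm,2)$ with its subspace topology, so there is nothing extra to check.)

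For the face claim: suppose $p \in F_r^s(n,m)$ and $p = t p_1 + (1-t) p_2$ with $t \in (0,1)$ and $p_1, p_2 \in C_r^s(nm,2)$. We must show $\pi(p_1), \pi(p_2) \in C_{ns}^s(n,m)$, i.e.\ that the non-signaling marginals of $\pi(p_1)$ and $\pi(p_2)$ are well-defined. The membership $p \in F_r^s(n,m)$ translates, via Equations (\ref{eqn1}) and (\ref{eqn2}) and Lemma \ref{PVMLemma} (as used in the proof of Proposition \ref{BijectionProp}), into the statement that for each fixed $x \in [n]$ the quantities $w_{(x,i),(x,j)} := p(0,0|(x,i),(x,j))$ vanish for $i \neq j$ and sum to $1$ over $i$ as $j$ ranges over $[m]$. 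These are \emph{affine} (in fact linear) equalities in the coordinates of $p$. Since $p$ is a proper convex combination of $p_1, p_2$ with $t \in (0,1)$, and since the corresponding coordinates of $p_1, p_2$ are each $\tau$-values of products of projections, hence lie in $[0,1]$ (in particular $w_{(x,i),(x,j)}(p_k) \geq 0$ for each $k$), the vanishing of the convex combination forces the vanishing of each term: $w_{(x,i),(x,j)}(p_k) = 0$ for $i \neq j$, $k = 1,2$. An analogous argument handles the normalization $\sum_j w_{(x,i),(x,j)} = 1$, using that each summand lies in $[0,1]$. Thus by Lemma \ref{PVMLemma} (applied inside whatever C*-algebra represents $p_k$) each $\{E_{0,(x,i)}\}_i$ arising from $p_k$ is a projection-valued measure, which is exactly the condition that $\pi(p_k) \in C_{ns}^s(n,m)$; hence $p_k \in F_r^s(n,m)$.

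The main obstacle to watch is the ``vanishing of a convex combination of nonnegative terms forces each to vanish'' step: it requires knowing that the relevant coordinates of \emph{arbitrary} points of $C_r^s(nm,2)$ are nonnegative (equivalently, that the off-diagonal synchronous probabilities and the deficiency from normalization are genuine probabilities). This follows because every $p_k \in C_r^s(nm,2) \subseteq C_{ns}^s(nm,2)$ is a correlation, so all its entries $p_k(a,b|\cdot,\cdot)$ lie in $[0,1]$ and the synchronous entries $p_k(0,1|(x,i),(x,i))$ etc.\ are nonnegative; one should phrase this carefully in terms of the actual correlation entries rather than the $w$-variables to keep the nonnegativity manifest. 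Once that is in place the argument is purely a matter of bookkeeping with Equations (\ref{eqn1})--(\ref{eqn2}) and Lemma \ref{PVMLemma}, and requires no new C*-algebraic input beyond what the proof of Proposition \ref{BijectionProp} already supplies.
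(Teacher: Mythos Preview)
Your argument for relative closedness and for the vanishing of the off-diagonal terms $p_k(0,0|(x,i),(x,j))$ for $i\neq j$ is correct and matches the paper. The gap is in your ``analogous argument'' for the normalization $\sum_{i} p_k(0,0|(x,i),(x,i)) = 1$. You write that this follows because ``each summand lies in $[0,1]$,'' but that is not enough: the convex-combination trick $t a_1 + (1-t)a_2 = 1 \Rightarrow a_1 = a_2 = 1$ requires each $a_k = \sum_i p_k(0,0|(x,i),(x,i))$ to satisfy $a_k \leq 1$, and knowing only that the individual summands lie in $[0,1]$ bounds $a_k$ by $m$, not by $1$. Your caveat paragraph does not rescue this, since it again appeals only to the fact that individual correlation entries lie in $[0,1]$.

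This is not merely cosmetic: as the paper remarks immediately after its proof, the bound $a_k \leq 1$ is \emph{false} for general $p_k \in C_{ns}^s(nm,2)$, and indeed $F_{ns}^s(n,m)$ is not a face of $C_{ns}^s(nm,2)$. So the C*-algebraic structure must enter here, not just at the end. The paper's fix is short and uses pieces you already have: once you know $\tau_k(E_{0,(x,i)} E_{0,(x,j)}) = 0$ for $i\neq j$, the \emph{first} clause of Lemma~\ref{PVMLemma} (with a faithful trace) gives $E_{0,(x,i)} E_{0,(x,j)} = 0$, so $\sum_i E_{0,(x,i)}$ is itself a projection and hence $\leq I$; applying $\tau_k$ yields $a_k \leq 1$. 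Only then does the convex-combination argument force $a_k = 1$, after which the second clause of Lemma~\ref{PVMLemma} gives the projection-valued measure. Inserting this step in place of your ``analogous argument'' makes the proof complete and essentially identical to the paper's.
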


\begin{proof} It is clear that $F_r^s(n,m)$ is relatively closed since it is the intersection of $C_r^s(nm,2)$ with the affine space defined by the non-signaling conditions. Suppose that $p \in F_r^s(n,m)$, $q,s \in C_r^s(nm,2)$ and $p = \lambda q + \mu s$ with $\lambda + \mu = 1$ and $\lambda, \mu \geq 0$. Since $\pi(p) \in C_{ns}^s(n,m)$, we have \[ \lambda q(0,0|(x,i),(x,j)) + \mu s(0,0|(x,i),(x,j)) = 0 \]  whenever $i \neq j$, and hence $q(0,0|(x,i),(x,j)) = s(0,0|(x,i),(x,j)) = 0$. 

Now if $q,s \in C_{qc}^s(nm,2)$, then $q(a,b|(x,i),(y,j)) = \tau(E_{a,(x,i)} E_{b,(y,j)})$ and $s(a,b|(x,i),(y,j)) = \rho(F_{a,(x,i)} F_{b,(y,j)})$ as in Theorem \ref{PaulsenWinter}. However, by Lemma \ref{PVMLemma}, we have $E_{0,(x,i)} E_{0,(x,j)} = 0$ and $F_{0,(x,i)} F_{0,(x,j)} = 0$ whenever $i \neq j$. It follows that $\sum_{i,j} E_{0,(x,i)} E_{0,(x,j)} = \sum_i E_{0,(x,i)} \leq I$ and hence $\sum_{i,j} q(0,0|(x,i),(x,i)) \leq 1$. Likewise, $\sum_{i,j} s(0,0|(x,i),(x,j)) \leq 1$. Consequently \[ \lambda (\sum_{i,j} q(0,0|(x,i),(x,j))) + \mu (\sum_{i,j} s(0,0|(x,i),(x,j))) = 1 \] implies that \[ \sum_{i,j} q(0,0|(x,i),(x,j)) = \sum_{i,j} s(0,0|(x,i),(x,j))=1.\] Therefore $\{E_{0,(x,i)}\}_{i=1}^m$ and $\{F_{0,(y,j)} \}_{j=1}^m$ are projection valued measures by Lemma \ref{PVMLemma}. It follows that $q,s \in F_{qc}^s(n,m)$. The remaining statements follow by assuming the correlations are realized in $\mathcal{R}^\omega$, some finite-dimensional C*-algebra, or some commutative C*-algebra and repeating the above arguments. \end{proof}

Before stating the main theorem of this section we should include a remark about the above proof. The reader may notice that if $F_{ns}^s(n,m)$ were a face of $C_{ns}^s(nm,2)$ then it would follow that $F_{r}^s(n,m)$ is a face of $C_{r}^s(nm,2)$ for each $r \in \{loc, q, qa, qc \}$ by Proposition \ref{BijectionProp}. This would be a better result and a more efficient proof. However the proposition is false for $r=ns$. Indeed, define the following matrices for each $x, y \in [2]$ and $x \neq y$: \[ p(i,j|x,x) = \begin{pmatrix} .5 & 0 \\ 0 & .5 \end{pmatrix}, \quad p(i,j|x,y) = \begin{pmatrix} .25 & .25 \\ .25 & .25 \end{pmatrix}, \] \[ q(i,j|x,x) = \begin{pmatrix} .5 & 0 \\ 0 & .5 \end{pmatrix}, \quad q(i,j|x,y) = \begin{pmatrix} .5 & .5 \\ .5 & .5 \end{pmatrix}, \] \[ s(i,j|x,x) = \begin{pmatrix} .5 & 0 \\ 0 & .5 \end{pmatrix}, \quad s(i,j|x,y) = \begin{pmatrix} 0 & 0 \\ 0 & 0 \end{pmatrix}. \] Then $p$ defines a correlation in $C_{ns}^s(2,2)$ while $q,s$ fail the non-signaling conditions and thus are not correlations. However $\pi^{-1}(p) \in F_{ns}^s(2,2)$, $\pi^{-1}(q), \pi^{-1}(s) \in C_{ns}^s(4,2)$ and \[ \pi^{-1}(p) = \frac{1}{2} \pi^{-1}(q) + \frac{1}{2} \pi^{-1}(r). \]

We conclude with the main theorem of this section.

\begin{theorem} \label{ConnesThm1} Connes' embedding problem has an affirmative answer if and only if $C_{qa}^s(n,2) = C_{qc}^s(n,2)$ for every $n \in \mathbb{N}$. \end{theorem}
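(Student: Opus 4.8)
The plan is to reduce to the chain of equivalences already recorded in Section 2. By the theorem of Kim--Paulsen--Schafhauser, $C_{qa}^s(n,m)$ coincides with the closure of $C_q^s(n,m)$ for all $n,m$, and by the theorem of Dykema--Paulsen, Connes' embedding problem has an affirmative answer if and only if this closure equals $C_{qc}^s(n,m)$ for all $n,m$. Hence Connes' embedding problem is equivalent to the statement that $C_{qa}^s(n,m) = C_{qc}^s(n,m)$ for every pair $n,m \in \mathbb{N}$. Since the case $m=2$ of this statement is a special case, the forward implication of the theorem is immediate, and it remains only to prove that $C_{qa}^s(n,2) = C_{qc}^s(n,2)$ for all $n$ forces $C_{qa}^s(n,m) = C_{qc}^s(n,m)$ for all $n$ and $m$.

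For this, fix $n,m$ and let $q \in C_{qc}^s(n,m)$; we show $q \in C_{qa}^s(n,m)$, the reverse inclusion being automatic. By the construction in the proof of Proposition \ref{BijectionProp}, there is a correlation $p \in F_{qc}^s(n,m)$ with $\pi(p) = q$: explicitly, writing $q(i,j|x,y) = \tau(F_{x,i}F_{y,j})$ as in Theorem \ref{PaulsenWinter}, one takes $E_{0,(x,i)} = F_{x,i}$, $E_{1,(x,i)} = I - F_{x,i}$ and sets $p(a,b|(x,i),(y,j)) = \tau(E_{a,(x,i)}E_{b,(y,j)})$, the identity $q = \pi(p)$ following from the non-signaling relations. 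In particular $p \in C_{qc}^s(nm,2)$. Applying the hypothesis with $nm$ in place of $n$ gives $C_{qc}^s(nm,2) = C_{qa}^s(nm,2)$, so $p \in C_{qa}^s(nm,2)$. Moreover $\pi(p) = q \in C_{qc}^s(n,m) \subseteq C_{ns}^s(n,m)$, so $p$ satisfies the defining condition of $F_{qa}^s(n,m)$ in Definition \ref{defn}, i.e. $p \in F_{qa}^s(n,m)$. Proposition \ref{BijectionProp} then yields $q = \pi(p) \in C_{qa}^s(n,m)$, as required.

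Combining the two directions with the Kim--Paulsen--Schafhauser and Dykema--Paulsen equivalences completes the argument. The only point requiring care is that the correlation $p$ produced from $q$ must lie in $F_{qc}^s(n,m)$ and not merely in $C_{qc}^s(nm,2)$: this is precisely the reason for the non-signaling side condition in Definition \ref{defn} and for stating Proposition \ref{BijectionProp} as a biconditional, and it is verified using the two-outcome marginal formulas (\ref{eqn1})--(\ref{eqn2}). I do not expect a genuine obstacle at this stage, since the substantive work — realizing $C_{qc}^s(n,m)$ as an affine slice of a two-outcome correlation set — is already carried out in Propositions \ref{bijectionProp2} and \ref{BijectionProp}; in particular the face property of Proposition \ref{FaceProp} is not actually needed for this deduction, though it clarifies the geometric picture.
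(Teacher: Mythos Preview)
Your proof is correct and follows essentially the same route as the paper: both reduce to the Dykema--Paulsen and Kim--Paulsen--Schafhauser equivalences and then use Proposition~\ref{BijectionProp} to pass between $C_r^s(n,m)$ and $F_r^s(n,m) \subseteq C_r^s(nm,2)$. The paper phrases the key step contrapositively and invokes Proposition~\ref{FaceProp}, but as you correctly observe the face property is not actually needed for the deduction---the definition of $F_r^s(n,m)$ as the intersection of $C_r^s(nm,2)$ with a fixed ($r$-independent) affine slice already suffices.
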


\begin{proof} By \cite{DykemaPaulsen2016} and \cite{KimPaulsenSchafhauser2018} we know that Connes' embedding conjecture is equivalent to $C_{qa}^s(n,m) = C_{qc}^s(n,m)$ for all $n$ and $m$. Thus it suffices to demonstrate equivalence of $C_{qa}^s(n,2) = C_{qc}^s(n,2)$ for all $n$ with $C_{qa}^s(n,m) = C_{qc}^s(n,m)$ for all $n$ and $m$.

Clearly if $C_{qa}^s(n,m) = C_{qc}^s(n,m)$ for all $n$ and $m$ then $C_{qa}^s(n,2) = C_{qc}^s(n,2)$ for all $n$. Now suppose that $C_{qa}^s(n,m) \neq C_{qc}^s(n,m)$ for some $n$ and $m$. Then by Proposition \ref{BijectionProp} we have $F_{qa}^s(n,m) \neq F_{qc}^s(n,m)$. However $F_{qa}^s(n,m)$ and $F_{qc}^s(n,m)$ are closed faces of $C_{qa}^s(nm,2)$ and $C_{qc}^s(nm,2)$, respectively, by Proposition \ref{FaceProp}. Therefore $C_{qa}^s(nm,2) \neq C_{qc}^s(nm,2)$. \end{proof}

As a corollary we obtain an unpublished result of Ozawa.

\begin{corollary}[Ozawa \cite{OzawaUnpublished}] \label{OzawaCor} Connes' embedding problem has an affirmative answer if and only if $C_{qa}(n,2) = C_{qc}(n,2)$ for every $n \in \mathbb{N}$. \end{corollary}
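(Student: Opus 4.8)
The plan is to deduce Corollary \ref{OzawaCor} from Theorem \ref{ConnesThm1} together with Ozawa's theorem, using only the elementary fact that the synchronous correlation sets are affine slices of the full correlation sets in the appropriate ambient space.

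For the forward implication, I would argue as follows. Suppose Connes' embedding problem has an affirmative answer. Then by Ozawa's theorem the weak Tsirelson problem holds for all $n,m \in \mathbb{N}$, that is, $C_{qa}(n,m) = C_{qc}(n,m)$ for all $n$ and $m$. Taking $m=2$ gives $C_{qa}(n,2) = C_{qc}(n,2)$ for every $n$, so this direction is immediate and requires no new work.

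For the reverse implication, I would use that, by the definition in Section 2, $C_r^s(n,2)$ is precisely the set of synchronous correlations in $C_r(n,2)$; equivalently, $C_r^s(n,2) = C_r(n,2) \cap S$, where $S \subseteq \mathbb{R}^{4n^2}$ is the affine subspace cut out by the synchronicity equations $p(0,1|x,x) = p(1,0|x,x) = 0$ for $x \in [n]$. Hence, assuming $C_{qa}(n,2) = C_{qc}(n,2)$ for every $n$, we obtain $C_{qa}^s(n,2) = C_{qa}(n,2) \cap S = C_{qc}(n,2) \cap S = C_{qc}^s(n,2)$ for every $n$, and Theorem \ref{ConnesThm1} then yields an affirmative answer to Connes' problem.

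I do not expect a genuine obstacle here: the only point to check carefully is that $C_{qa}^s$ and $C_{qc}^s$ are honestly the synchronic slices of their non-synchronous counterparts rather than a priori smaller sets, and this is exactly how they are defined. (One could alternatively avoid invoking the full strength of Ozawa's theorem in the forward direction by synchronizing a general two-outcome correlation into a synchronous one on a doubled set of experiments and landing in $C_r^s(2n,2)$, but given the results already available this detour is unnecessary.)
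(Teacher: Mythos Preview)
Your proposal is correct and essentially identical to the paper's own proof: the forward direction invokes Ozawa's theorem, and the reverse direction uses that $C_r^s(n,2) = C_r(n,2) \cap S$ for the synchronicity affine subspace $S$ together with Theorem \ref{ConnesThm1} (the paper phrases this direction contrapositively but the content is the same). Your remark that $C_{qa}^s$ is by definition the synchronic slice of $C_{qa}$ is exactly the point the paper uses.
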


\begin{proof} If Connes' conjecture is true then $C_{qa}(n,2) = C_{qc}(n,2)$ for every $n$. On the other hand, if Connes' conjecture is false, then by Theorem \ref{ConnesThm1} we have $C_{qa}^s(n,2) \neq C_{qc}^s(n,2)$ for some $n$. This implies that $C_{qa}(n,2) \neq C_{qc}(n,2)$ since the synchronous correlations constitute an affine slice of the non-synchronous correlations. \end{proof}

\begin{remark} \emph{After a first preprint of this paper was made available, the author was contacted by Samuel Harris who pointed out that the main result of this section (Theorem \ref{ConnesThm1}) also appears in his thesis (\cite{HarrisThesis}, Theorem 1.9.5). The proofs are similar. We have left this section unchanged since it includes several additional details about the embedding of $C_r^s(n,m)$ into $C_r^s(nm,2)$ and because these results are used in and motivate the results of the next section.} \end{remark}

\section{Extreme points of two-outcome quantum correlations}

In this section we will consider the sets $D_{r}(n)$ for $r \in \{loc, q, qa, qc \}$ which, by Proposition \ref{bijectionProp2}, is affinely isomorphic to $C_r(n,2)$. We will describe the geometry of $D_r(n)$ in terms of a family of subsets indexed by vectors in $[0,1]^n$, which we define now.

\begin{definition} For each $\vec{y} \in [0,1]^n$ and $r \in \{loc, q, qa, qc \}$, we define the $\boldsymbol{\vec{y}}$\textbf{-slice} of $D_r(n)$, denoted by $S_{\vec{y}}[D_r(n)]$, to be the set of all vectors \[ (w_{i,j})_{i < j < n} \subseteq \mathbb{R}^{\frac{(n-1)(n-2)}{2}} \] with the following property: there exists $p \in D_r(n)$ such that $p_{i,j} = w_{i,j}$ for each $0 \leq i < j < n$ and $p_{i,i} = y_i$ for each $i \in [n]$. \end{definition}

\noindent Essentially the $\vec{y}$-slice of $D_r(n)$ is the projection of the subset of $D_r(n)$ with diagonal entries given by $\vec{y}$ onto its upper diagonal entries. Since $D_r(n)$ is affinely isomorphic to $C_r^s(n,2)$ and since elements of $D_r(n)$ are symmetric matrices, we see that the geometry of $C_r^s(n,2)$ is determined by the $\vec{y}$-slices of $D_r(n)$. We remark that since $\tau(P_i P_j) \leq \min(\tau(P_i), \tau(P_j))$ for every choice of $P_i$ and $P_j$, we see that $S_{\vec{r}}[ D_{qc}(n)]$ is a subset of the $\frac{1}{2}(n-1)(n-2)$-dimensional rectangular prism whose $(i,j)$ coordinate lies in the interval $[0, \min(y_i, y_j)]$. 

We will analyze the geometry of the $\vec{y}$-slices of $D_r(n)$ by considering their projections onto lines given by $\Span( \vec{x})$ where $\vec{x} = (x_{i,j})_{i < j < n} \in \mathbb{R}^{\frac{(n-1)(n-2)}{2}}$ is any non-zero vector. To this end we define for each $r \in \{loc, q, qc\}$, vector $\vec{x} = (x_{i,j})_{i < j < n}$ and vector $\vec{y} \in [0,1]^n$ the quantities \[ u_r(\vec{y}, \vec{x}) = \sup \{ \sum_{i < j < n} x_{i,j} w_{i,j} : w \in S_{\vec{y}}[D_r(n)] \} \] and \[ l_r(\vec{y}, \vec{x}) = \inf \{ \sum_{i < j < n} x_{i,j} w_{i,j} : w \in S_{\vec{y}}[D_r(n)] \}. \] When $\vec{x}$ is a unit vector the set $[l_{r}(\vec{y}, \vec{x}), u_{r}(\vec{y}, \vec{x})] \vec{x}$ is exactly the closure of the projection of $S_{\vec{r}}[ D_{qc}(n)]$ onto the ray $\Span{(\vec{x})}$. Otherwise it is a rescaling of this projection.

The functional $l_r(\vec{y}, \vec{x})$ directly generalizes the graph functional $f_r(t)$ defined by Dykema-Paulsen-Prakash in \cite{DykemaPaulsenPrakash2019}. In fact, settting $\vec{y}$ equal to the vector with constant entries of $t$ and $\vec{x}$ equal to the constant vector with entries of $1$ we get $f_r(t) = 2 l_r(\vec{y}, \vec{x}) + nt$. The next theorem demonstrates why we are interested in the quantities $l_{r}(\vec{y}, \vec{x})$ and $u_{r}(\vec{y}, \vec{x})$.

\begin{theorem} Connes' embedding problem has an affirmative solution if and only if for every $n \in \mathbb{N}$, $\vec{y} \in [0,1]^n$ and $\vec{x} \in \mathbb{R}^{\frac{(n-1)(n-2)}{2}}$ we have \[ u_q(\vec{y}, \vec{x}) = u_{qc}(\vec{y}, \vec{x}). \] \end{theorem}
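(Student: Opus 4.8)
The plan is to convert the identities $u_q(\vec y,\vec x)=u_{qc}(\vec y,\vec x)$ into the single statement $D_{qa}(n)=D_{qc}(n)$ for every $n$, which by Theorem \ref{ConnesThm1} and the affine isomorphism of Equations (\ref{eqn1}) and (\ref{eqn2}) (see also Proposition \ref{bijectionProp2}) is equivalent to an affirmative answer to Connes' problem. View $D_r(n)$ inside the space of symmetric $n\times n$ matrices, and for $\vec x=(x_{i,j})_{i<j}$ and $\vec c=(c_i)_{i\in[n]}$ let $h_r(\vec x,\vec c)=\sup\bigl\{\sum_{i<j}x_{i,j}p_{i,j}+\sum_i c_i p_{i,i}:p\in D_r(n)\bigr\}$ be the associated support function. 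The first step is the bookkeeping identity
\[ h_r(\vec x,\vec c)=\sup_{\vec y\in[0,1]^n}\Bigl(u_r(\vec y,\vec x)+\textstyle\sum_{i\in[n]}c_i y_i\Bigr), \]
which holds because every $\vec y\in[0,1]^n$ occurs as the diagonal of some matrix in $D_r(n)$ — a product Bernoulli measure on $\{0,1\}^n$ realizes it already in $D_{loc}(n)$ — and because slicing $D_r(n)$ by the affine conditions $p_{i,i}=y_i$ and recording the strict upper triangle is exactly the passage to $S_{\vec y}[D_r(n)]$.

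For the first implication, suppose $u_q(\vec y,\vec x)=u_{qc}(\vec y,\vec x)$ for all $n,\vec y,\vec x$. Fixing $n$, the identity above upgrades this to $h_q(\vec x,\vec c)=h_{qc}(\vec x,\vec c)$ for all $\vec x,\vec c$. Since $D_q(n)$ is convex with $\overline{D_q(n)}=D_{qa}(n)$, and a set has the same support function as its closure, $h_q$ is the support function of the closed convex set $D_{qa}(n)$; as $D_{qc}(n)$ is also closed and convex, equality of support functions forces $D_{qa}(n)=D_{qc}(n)$. This holds for every $n$, so Connes' problem has an affirmative answer.

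Conversely, assume Connes' problem is affirmative, so $D_{qa}(n)=D_{qc}(n)$, hence $S_{\vec y}[D_{qa}(n)]=S_{\vec y}[D_{qc}(n)]$, for all $n,\vec y$. As $S_{\vec y}[D_q(n)]\subseteq S_{\vec y}[D_{qc}(n)]$ we already have $u_q\le u_{qc}$, so it suffices to prove that $S_{\vec y}[D_q(n)]$ is dense in $S_{\vec y}[D_{qa}(n)]$: then any $w\in S_{\vec y}[D_{qc}(n)]$ is a limit of $w^{(k)}\in S_{\vec y}[D_q(n)]$ with $\langle\vec x,w^{(k)}\rangle\le u_q(\vec y,\vec x)$, so $\langle\vec x,w\rangle\le u_q(\vec y,\vec x)$ and thus $u_{qc}(\vec y,\vec x)\le u_q(\vec y,\vec x)$. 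To prove the density I would take $p\in D_{qa}(n)=\overline{D_q(n)}$ with $\mathrm{diag}(p)=\vec y$ and choose $p^{(k)}\to p$ with $p^{(k)}_{i,j}=\tau_k(P^{(k)}_iP^{(k)}_j)$ for projections $P^{(k)}_i$ in a finite-dimensional C*-algebra $\mathfrak{A}_k$ carrying a tracial state $\tau_k$. The diagonal $p^{(k)}_{i,i}=\tau_k(P^{(k)}_i)$ converges to $\vec y$ but need not equal it, and I would correct it in two moves. First, for each $i$ with $y_i\in\{0,1\}$ replace $P^{(k)}_i$ by $0$ or $I$ respectively; a Cauchy--Schwarz estimate for $\tau_k$ (using $\tau_k(I-P^{(k)}_i)\to 0$, resp.\ $\tau_k(P^{(k)}_i)\to 0$) shows this alters $p^{(k)}$ by $o(1)$, so the modified sequence still tends to $p$ and now has $p^{(k)}_{i,i}=y_i$ exactly for those $i$. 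Second, pass to $\mathfrak{B}_k=\mathfrak{A}_k\oplus C(\{0,1\}^n)$ with tracial state $(1-\epsilon_k)\tau_k\oplus\epsilon_k\nu_k$ and projections $Q_i=P^{(k)}_i\oplus\chi_{\{\omega_i=1\}}$, where $\nu_k$ is the product (Bernoulli) measure whose $i$-th marginal $(\nu_k)_i$ is chosen so that $(1-\epsilon_k)p^{(k)}_{i,i}+\epsilon_k(\nu_k)_i=y_i$, with $\epsilon_k\to 0$ slow enough that $\epsilon_k^{-1}\max_i|p^{(k)}_{i,i}-y_i|\to 0$ (so $(\nu_k)_i\in[0,1]$ for large $k$). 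Then $q^{(k)}=\bigl((1-\epsilon_k)p^{(k)}_{i,j}+\epsilon_k(\nu_k)_i(\nu_k)_j\bigr)$ belongs to $D_q(n)$, has diagonal exactly $\vec y$, and converges to $p$, so its strict upper triangle lies in $S_{\vec y}[D_q(n)]$ and converges to that of $p$.

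The hard part is precisely this last density statement: $D_q(n)$ is not closed, and the diagonal map $D_r(n)\to[0,1]^n$ is not manifestly open, so a priori the fiber of $\overline{D_q(n)}$ over $\vec y$ could strictly contain the closure of the fiber of $D_q(n)$ over $\vec y$. The direct-sum-with-a-commutative-block construction above is designed to rule this out, and the two places needing care are the separate handling of the coordinates with $y_i\in\{0,1\}$ (a projection of such a trace is forced, and a product-measure correction cannot adjust the diagonal there unless those coordinates are first set exactly) and the choice of rate for $\epsilon_k$. Everything else — the factorization of $h_r$ through the $\vec y$-slices, and the principle that support functions determine closed convex bodies — is routine, and the endgame is Theorem \ref{ConnesThm1} together with Proposition \ref{bijectionProp2}.
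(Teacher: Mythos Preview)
Your proof is correct, and it takes a route that differs from the paper's in two respects.

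For the implication ``$u_q=u_{qc}$ everywhere $\Rightarrow$ Connes'', the paper argues by contrapositive: if $D_{qa}(n)\neq D_{qc}(n)$ for some $n$, pick a slice $S_{\vec y}$ where they differ and apply Hahn--Banach \emph{inside that slice} to produce a separating $\vec x$. You instead aggregate over all slices via the identity $h_r(\vec x,\vec c)=\sup_{\vec y}\bigl(u_r(\vec y,\vec x)+\sum_i c_iy_i\bigr)$ and invoke the standard fact that equal support functions force equality of closed convex bodies. Both arguments are of course Hahn--Banach at heart, but yours avoids having to locate the particular $\vec y$ at which the slices first disagree.

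For the implication ``Connes $\Rightarrow u_q=u_{qc}$'', the paper simply writes that $D_{qc}(n)=\overline{D_q(n)}$ ``consequently'' gives $u_q(\vec y,\vec x)=u_{qc}(\vec y,\vec x)$, without addressing why the closure of $D_q(n)$ should have the same $\vec y$-fibre as the closure of the $\vec y$-fibre of $D_q(n)$. You identify this as the genuine content of that direction and supply the missing argument: perturb an approximating sequence $p^{(k)}\in D_q(n)$ so that its diagonals land exactly on $\vec y$, first by forcing $P_i^{(k)}\in\{0,I\}$ when $y_i\in\{0,1\}$ (with the Cauchy--Schwarz control), and then by taking a small convex combination with a commutative (product-Bernoulli) block to repair the remaining coordinates. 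This is exactly the ``fibrewise density'' statement the paper's one-line assertion needs, and your construction is clean; the only points requiring care are precisely the two you flag (the boundary coordinates $y_i\in\{0,1\}$ and the rate of $\epsilon_k$), and both are handled correctly.
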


\begin{proof} If Connes's conjecture is true then $D_{qc}(n) = \overline{D_{q}(n)}$ for every $n$ and consequently \[ u_q(\vec{y}, \vec{x}) = u_{qc}(\vec{y}, \vec{x}) \] for every $\vec{y}$ and $\vec{x}$. Now suppose Connes's conjecture is false. Then there exists some $n$ such that $D_{qc}(n) \neq D_{qa}(n)$ by Theorem \ref{ConnesThm1}. Thus $S_{\vec{y}}[D_{qc}(n)] \neq S_{\vec{y}}[D_{qa}(n)]$ for some $\vec{y}$. Suppose that $p \in S_{\vec{y}}[D_{qc}(n)] \setminus S_{\vec{y}}[D_{qa}(n)]$. By the Hahn-Banach theorem there exists a hyperplane that strictly separates $p$ from $S_{\vec{y}}[D_{qa}(n)]$. Taking $\vec{x}$ to be a vector normal to this hyperplane, we get \[ [l_{qc}(\vec{y},\vec{x}), u_{qc}(\vec{y},\vec{x})] \neq [l_{q}(\vec{y},\vec{x}), u_{q}(\vec{y},\vec{x})]. \] Finally notice that that $l_r(\vec{y}, \vec{x}) = - u_r(\vec{y}, -\vec{x})$. \end{proof}

Since $D_{qc}(n)$ is a compact set, there exists a $C^*$-algebra $\mathfrak{A}$, projections $\{P_i\} \subseteq \mathfrak{A}$ and a tracial state $\tau$ on $\mathfrak{A}$ such that \[ u_{r}(\vec{y}, \vec{x}) = \sum_{i < j < n} x_{i,j} \tau(P_i P_j). \] We will show that the projections $\{P_i\}$ achieving this value satisfy certain relations. This follows from the next proposition.

\begin{proposition}[Dykema-Paulsen-Prakash, \cite{DeltaGame}] \label{DPPproposition} Let $\tau$ be a trace on a $C^*$-algebra $\mathfrak{A}$ and let $A,B \in \mathfrak{A}$ be hermitian with $[A,B] \neq 0$. Then there exists a hermitian $H \in \mathfrak{A}$ such that the function $f(t) = \tau(A e^{iHt} B e^{-iHt})$ satisfies $f'(0) > 0$. \end{proposition}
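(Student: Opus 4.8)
The plan is to compute $f'(0)$ explicitly and then make a single well-chosen substitution for $H$. First I would note that since $H$ is hermitian, $t\mapsto e^{iHt}$ is a norm-differentiable one-parameter group of unitaries in $\mathfrak{A}$, with $\tfrac{d}{dt}e^{iHt} = iH\,e^{iHt}$ and $\tfrac{d}{dt}e^{-iHt} = -iH\,e^{-iHt}$; because $\tau$ is linear and norm-continuous, $f$ is differentiable and we may differentiate under $\tau$ term by term. Evaluating at $t=0$ (where both exponentials equal $I$) gives
\[
f'(0) = \tau(A\cdot iH\cdot B) + \tau(A\cdot B\cdot(-iH)) = i\bigl(\tau(AHB) - \tau(ABH)\bigr).
\]
Applying the trace identity $\tau(AHB) = \tau(BAH)$, this becomes
\[
f'(0) = i\,\tau\bigl((BA - AB)H\bigr) = -i\,\tau\bigl([A,B]\,H\bigr).
\]

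Next I would use that $A,B$ are hermitian, so $[A,B]^* = -[A,B]$ is skew-adjoint; hence $K := -i[A,B]$ is hermitian and $K\neq 0$ precisely because $[A,B]\neq 0$. Writing $[A,B] = iK$ and substituting yields $f'(0) = -i\,\tau(iKH) = \tau(KH)$. Now simply take $H = K$, which is hermitian. Then
\[
f'(0) = \tau(K^2) = \tau(K^*K) \geq 0,
\]
and this is the desired conclusion once strict positivity is established.

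The only genuinely delicate point is upgrading $\tau(K^*K)\geq 0$ to $\tau(K^*K) > 0$: this requires that $K\neq 0$ not be annihilated by $\tau$, i.e. faithfulness of $\tau$. In the applications of this proposition the trace may be taken faithful (cf. the remark following Theorem~\ref{PaulsenWinter}), so $K\neq 0$ forces $\tau(K^2)>0$ and we are done. More generally, one passes to the GNS representation $\pi_\tau$ and its weak closure, where the canonical vector trace is faithful, so the statement is understood with $[A,B]$ nonzero in $\pi_\tau(\mathfrak{A})$. Apart from this positivity subtlety — and the entirely routine justification of differentiating $t\mapsto\tau(Ae^{iHt}Be^{-iHt})$ termwise — the argument involves no real obstacle: the computation is short, and the choice $H = -i[A,B]$ is forced by the shape of $f'(0)$.
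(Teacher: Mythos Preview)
Your argument is correct and is the standard one: differentiate, use the trace property to get $f'(0)=\tau(KH)$ with $K=-i[A,B]$ hermitian, and take $H=K$. Note, however, that the present paper does not supply a proof of this proposition at all; it is simply quoted from Dykema--Paulsen--Prakash \cite{DeltaGame}, so there is no in-paper proof to compare against. Your computation matches the original argument in that reference.

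Your remark about faithfulness is not a quibble but an actual correction to the statement as literally written: without faithfulness the conclusion can fail (e.g.\ $\mathfrak{A}=\mathbb{M}_2\oplus\mathbb{C}$ with $\tau$ supported on the $\mathbb{C}$ summand and $A,B$ non-commuting in the $\mathbb{M}_2$ summand gives $f\equiv 0$). You are right that in every use of this proposition in the paper the trace may be taken faithful via GNS, so the gap is harmless in context; it would be cleanest to add ``faithful'' to the hypotheses.
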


\begin{proposition} \label{CommutationRelations} Suppose that $ u_r(\vec{y}, \vec{x}) = \sum_{i < j < n} x_{i,j} \tau(P_i P_j)$. Then \[ [P_i, \sum_{i \neq j} x_{i,j} P_j] = 0. \] \end{proposition}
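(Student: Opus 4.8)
The plan is to argue by contradiction, using Proposition~\ref{DPPproposition} to manufacture a strictly better configuration whenever one of the asserted commutation relations fails. Throughout, I extend $\vec{x}$ to a symmetric array by setting $x_{j,i} := x_{i,j}$ for $j < i$, so that the hermitian operator $B_i := \sum_{j \neq i} x_{i,j} P_j$ in the statement is well defined. Assume $\{P_i\}$ and $\tau$ realize the supremum, i.e. $u_r(\vec{y},\vec{x}) = \sum_{k<l<n} x_{k,l}\tau(P_k P_l)$, but suppose toward a contradiction that $[P_i, B_i] \neq 0$ for some fixed $i$. Both $P_i$ and $B_i$ are hermitian, so Proposition~\ref{DPPproposition} supplies a hermitian $H \in \mathfrak{A}$ for which $f(t) := \tau(P_i e^{iHt} B_i e^{-iHt})$ satisfies $f'(0) > 0$.

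Next I would perturb the tuple by conjugating only the $i$-th projection: set $P_i(t) := e^{-iHt} P_i e^{iHt}$ and $P_k(t) := P_k$ for $k \neq i$. Each $P_k(t)$ is again a projection, and traciality gives $\tau(P_i(t)) = \tau(P_i) = y_i$, so the tuple $\{P_k(t)\}$ still produces a point of the $\vec{y}$-slice. Moreover, conjugation by $e^{-iHt}$ (with $e^{-iHt} \in \mathfrak{A}$) preserves whatever structure defines the class $r$ -- it is trivial in the commutative case $r = loc$, preserves finite-dimensionality when $r = q$, and imposes nothing when $r = qc$ -- so the vector $w(t) := \big(\tau(P_k(t) P_l(t))\big)_{k<l<n}$ lies in $S_{\vec{y}}[D_r(n)]$. (In the case $r = loc$ the proposition is trivial anyway, since commuting projections commute.)

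The final step is to differentiate the objective along this path. Writing $g(t) := \sum_{k<l<n} x_{k,l} \tau(P_k(t) P_l(t))$, the terms with both indices $\neq i$ are independent of $t$; using the symmetrization $x_{k,i} = x_{i,k}$ together with the cyclicity of $\tau$, the remaining terms collapse to $\tau(P_i(t) B_i)$, and one more application of cyclicity gives $\tau(P_i(t) B_i) = \tau(P_i e^{iHt} B_i e^{-iHt}) = f(t)$. Hence $g(t) - g(0) = f(t) - f(0)$ and $g'(0) = f'(0) > 0$, so for all sufficiently small $t > 0$ we obtain $g(t) > g(0) = u_r(\vec{y},\vec{x})$ while $g(t) = \sum_{k<l<n} x_{k,l}\, w(t)_{k,l}$ with $w(t) \in S_{\vec{y}}[D_r(n)]$. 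This contradicts the fact that $u_r(\vec{y},\vec{x})$ is the supremum of $\sum x_{k,l}w_{k,l}$ over $S_{\vec{y}}[D_r(n)]$, and we conclude that $[P_i, B_i] = 0$ for every $i$.

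The conceptual input is Proposition~\ref{DPPproposition}; what remains is essentially bookkeeping, and the points that require care are: (i) choosing the direction of conjugation, $e^{-iHt}(\cdot)e^{iHt}$ rather than $e^{iHt}(\cdot)e^{-iHt}$, so that $g'(0)$ comes out positive rather than negative; (ii) confirming that the perturbed tuple remains in the same $\vec{y}$-slice and the same correlation class; and (iii) the reorganization of the off-diagonal portion of $g(t)$ into $\tau(P_i(t) B_i)$ via the symmetrization of $\vec{x}$ and the trace property. I expect (iii), routine but slightly tedious, to be the only place where a reader will want the computation written out in full.
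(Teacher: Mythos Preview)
Your proof is correct and follows essentially the same approach as the paper: argue by contradiction, invoke Proposition~\ref{DPPproposition}, and perturb by a unitary conjugation to beat the supremum. The only cosmetic difference is that you conjugate $P_i$ alone by $e^{-iHt}(\cdot)e^{iHt}$, whereas the paper conjugates every $P_j$ with $j\neq i$ by $e^{iHt}(\cdot)e^{-iHt}$; by cyclicity of the trace these two perturbations yield the same objective value $g(t)$, so the arguments are equivalent.
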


\begin{proof} Set $A = \sum_{i \neq j} x_{i,j} P_j$. Since $\{P_i\}$ satisfy $ u_r(\vec{y}, \vec{x}) = \sum_{i < j < n} x_{i,j} \tau(P_i P_j) = \tau(P_i A)$, it follows from Proposition \ref{DPPproposition} that $[P_i, A]=0$. Otherwise we could choose a hermitian operator $H$ and a small $t > 0$ such that \[ u_r(\vec{y}, \vec{x}) < \sum_{i < j < n} x_{i,j} \tau(Q_i Q_j) \] where $Q_i = P_i$ and $Q_j = e^{iHt}P_je^{-iHt}$ for each $j \neq i$. Since $\tau(Q_i) = \tau(P_i) = y_i$ this contradicts the definition of $u_r(\vec{y}, \vec{x})$. \end{proof}

In light of the preceding proposition, we define a family of C*-algebras in which one can produce the extreme points of $D_r(n)$.

\begin{definition} For each $n \in \mathbb{R}$ and $\vec{x} \in \mathbb{R}^{\frac{(n-1)(n-2)}{2}}$, we define the C*-algebra $\mathfrak{A}_{\vec{x}}$ to be the universal C*-algebra generated by projections $P_0, P_1, \dots, P_{n-1}$ satisfying the relations
\[ [P_i, \sum_{i \neq j} x_{i,j} P_j] = 0 \] for each $i \in [n]$.
\end{definition}

\begin{theorem} \label{ExtremePointThm} Let $r \in \{loc, q, qc\}$ and suppose that $p$ is an extreme point of $D_r(n)$. Then there exists a vector $\vec{x} \in \mathbb{R}^{\frac{(n-1)(n-2)}{2}}$ and a trace $\tau$ on  $\mathfrak{A}_{\vec{x}}$ such that $p(i,j) = \tau(P_i P_j)$ and $\tau(P_i) = y_i$. \end{theorem}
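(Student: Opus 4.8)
The plan is to take an extreme point $p$ of $D_r(n)$ and exhibit a vector $\vec{x}$ together with a trace on $\mathfrak{A}_{\vec{x}}$ producing it. First I would set $\vec{y} = (y_i)$ with $y_i = p_{i,i}$, so that $p$ lies in the slice $S_{\vec{y}}[D_r(n)]$ (after discarding the diagonal data, which is recorded separately). Since $p$ is extreme in $D_r(n)$ and the diagonal coordinates are fixed along the slice, the projection of $p$ onto the upper-triangular coordinates is an extreme point of $S_{\vec{y}}[D_r(n)]$; I would want to argue this slice is still (relatively) compact and convex, so that by the Hahn--Banach/supporting hyperplane theorem there is a vector $\vec{x} = (x_{i,j})_{i<j<n}$ with $\sum_{i<j<n} x_{i,j} w_{i,j}$ maximized over $S_{\vec{y}}[D_r(n)]$ exactly at (the image of) $p$; that is, $u_r(\vec{y},\vec{x}) = \sum_{i<j<n} x_{i,j} p_{i,j}$.

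Next I would invoke the realization discussed just before the statement: since $D_{qc}(n)$ (and analogously $D_q(n)$ in finite dimensions, $D_{loc}(n)$ commutatively) is compact, the supremum defining $u_r(\vec{y},\vec{x})$ is attained, so there is a $C^*$-algebra $\mathfrak{A}$ with a tracial state $\tau$ and projections $\{P_i\} \subseteq \mathfrak{A}$ with $\tau(P_i) = y_i$ and $\sum_{i<j<n} x_{i,j}\,\tau(P_iP_j) = u_r(\vec{y},\vec{x})$. By Proposition \ref{CommutationRelations}, these projections satisfy $[P_i, \sum_{i\neq j} x_{i,j} P_j] = 0$ for every $i$ (extending $x_{i,j} := x_{j,i}$ for $i>j$). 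Hence the assignment $P_i \mapsto P_i$ extends, by the universal property, to a unital $*$-homomorphism $\rho: \mathfrak{A}_{\vec{x}} \to \mathfrak{A}$ sending the canonical generators to these $P_i$. Pulling $\tau$ back along $\rho$ gives a tracial state $\tau' = \tau \circ \rho$ on $\mathfrak{A}_{\vec{x}}$ with $\tau'(P_iP_j) = \tau(\rho(P_i)\rho(P_j)) = \tau(P_iP_j)$ for all $i,j$, so in particular $\tau'(P_i) = y_i$ and the off-diagonal values $\tau'(P_iP_j) = p_{i,j}$ — but wait, I only know $\sum x_{i,j}\tau(P_iP_j) = u_r = \sum x_{i,j} p_{i,j}$, not that each $\tau(P_iP_j)$ equals $p_{i,j}$ individually.

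This last gap is the main obstacle, and closing it is where extremality of $p$ does the real work. The point $w := (\tau(P_iP_j))_{i<j<n}$ lies in $S_{\vec{y}}[D_r(n)]$ and achieves the maximum of the linear functional $\vec{x}$; since $\vec{x}$ is a supporting functional whose maximizing face over $S_{\vec{y}}[D_r(n)]$ is (by our choice) the single extreme point $p$, we get $w = p$, hence $\tau(P_iP_j) = p_{i,j}$ for all $i<j$. The delicate step is therefore the choice of $\vec{x}$: I need $\vec{x}$ not merely to support $S_{\vec{y}}[D_r(n)]$ at $p$ but to do so with $p$ as the \emph{unique} maximizer. For an extreme point of a compact convex set in $\mathbb{R}^d$, a strictly exposing functional need not exist in general, but by a standard argument (e.g. Straszewicz's theorem, or a perturbation/approximation argument passing through exposed points) one can handle this — either $p$ is exposed and a suitable $\vec{x}$ exists directly, or $p$ is a limit of exposed points and one takes a limit of the corresponding traces on $\mathfrak{A}_{\vec{x}_k}$, using weak-$*$ compactness of the state space together with a diagonal/subsequence argument to extract a trace on some $\mathfrak{A}_{\vec{x}}$ (after also passing to a convergent subsequence of the $\vec{x}_k$, normalized). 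I would present the exposed-point case in detail and remark that the general extreme-point case follows by this limiting procedure, which is the only genuinely technical part of the argument.
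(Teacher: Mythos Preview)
Your overall strategy is sound, but you have introduced an unnecessary detour that you then spend most of the proposal trying to repair. The paper's proof avoids your self-identified gap entirely by a simple reordering of steps: instead of first picking $\vec{x}$ and then invoking compactness to find \emph{some} maximizer of the linear functional, the paper begins by fixing a tracial representation of $p$ itself. Since $p \in D_r(n)$, there are already a C*-algebra $\mathfrak{B}$, a trace $\rho$, and projections $Q_0,\dots,Q_{n-1}$ with $p(i,j)=\rho(Q_iQ_j)$. Because $p$ is extreme in the slice $S_{\vec{y}}[D_r(n)]$, the supporting-hyperplane theorem gives a normal $\vec{x}$ for which $p$ attains $u_r(\vec{y},\vec{x})$; hence these very projections $Q_i$ satisfy the hypothesis of Proposition~\ref{CommutationRelations} and therefore the commutation relations. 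Universality then yields a $*$-homomorphism $\mathfrak{A}_{\vec{x}}\to\mathfrak{B}$ with $P_i\mapsto Q_i$, and pulling $\rho$ back gives a trace $\tau$ on $\mathfrak{A}_{\vec{x}}$ with $\tau(P_iP_j)=\rho(Q_iQ_j)=p(i,j)$ on the nose.

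The point is that Proposition~\ref{CommutationRelations} applies to \emph{any} family of projections realizing the supremum, so you are free to apply it to the one that already produces $p$. This makes your uniqueness-of-maximizer issue disappear: there is no need for $p$ to be an exposed point, no appeal to Straszewicz, and no limiting argument over varying algebras $\mathfrak{A}_{\vec{x}_k}$. Your proposed limit procedure could likely be made rigorous (lift all traces to the universal C*-algebra on $n$ projections, pass to a weak-$*$ limit, and check via GNS that the limit trace annihilates the relevant commutator ideal), but it is substantially more work than the one-line fix of starting from a representation of $p$.
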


\begin{proof} Since $p \in D_{qc}(n)$ there exists a C*-algebra $\mathfrak{B}$, a trace $\rho$ on $\mathfrak{B}$ and projections $Q_0, \dots, Q_{n-1}$ such that $p(i,j)=\rho(Q_i Q_j)$. Since $p$ is an extreme point of $D_r(n)$, its upper triangular entries (which we also denote $p$) must produce an extreme point for $S_{\vec{y}}[D_r(n)]$ where $y_i = p(i,i)$. By the Hahn-Banach Theorem, there exists a hyperplane in $\mathbb{R}^{\frac{(n-1)(n-2)}{2}}$ containing $p$ for which $S_{\vec{y}}[D_r(n)]$ is contained in one of its half-spaces. Taking $\vec{x}$ to be a vector normal to this hyperplane we get $u_r(\vec{y}, \vec{x}) = \sum_{i < j < n} x_{i,j} \tau(P_i P_j)$. By Proposition \ref{CommutationRelations} the projections $\{Q_i\}$ satisfy $[Q_i, \sum_{i \neq j} x_{i,j} Q_j] = 0$ for each $i \in [n]$. By the universality of $\mathfrak{A}_{\vec{x}}$, there exists $*$-homomorphism $\pi: \mathfrak{A}_{\vec{x}} \rightarrow \mathfrak{B}$ satisfying $\pi(P_i) = \pi(Q_i)$ for each $i \in [n]$. Setting $\tau = \rho \circ \pi$ yields a trace satisfying the conditions of the theorem. \end{proof}

We consider the special case of $D_{qc}(3)$. In this case, the relations of Proposition \ref{CommutationRelations} become \[ [A,aB + bC] = [B, aA + cC] = [C, bA + cB] = 0 \] where $A := P_0, B := P_1, C:= P_2, a:=x_{0,1}, b:=x_{0,2}$, and $c:=x_{1,2}$. Note that $[A,aB+bC] = [B,aA+cC] = 0$ implies that $[C,bA+cB]=0$ since \[ -[C,bA+cB] = [A,aB+bC] + [B,aA+cC]. \] Finally, we note that to consider an arbitrary ray paralell to $(a,b,c)$ with each of $a,b,c$ non-zero we may assume without loss of generality that $c=1$ by rescaling.

\begin{theorem} \label{universal3} Let $\mathfrak{A}$ be the universal $C^*$-algebra generated by projections $A,B,C$ satisfying the relations \[ [A,aB+bC] = [B,aA+C] = 0 \] for non-zero $a,b \in \mathbb{R}$. Then $\mathfrak{A} \cong \mathbb{C}^8 \oplus \mathbb{M}_2$ with \begin{eqnarray} A & = & 1 \oplus 1 \oplus 1 \oplus 1 \oplus 0 \oplus 0 \oplus 0 \oplus 0 \oplus \begin{pmatrix} 1 & 0 \\ 0 & 0 \end{pmatrix} \nonumber \\ B & = & 1 \oplus 1 \oplus 0 \oplus 0 \oplus 1 \oplus 1 \oplus 0 \oplus 0 \oplus \begin{pmatrix} t & \sqrt{t(1-t)} \\ \sqrt{t(1-t)} & 1-t \end{pmatrix} \nonumber \\ C & = & 1 \oplus 0 \oplus 1 \oplus 0 \oplus 1 \oplus 0 \oplus 1 \oplus 0 \oplus \begin{pmatrix} z & -\frac{a}{b}\sqrt{t(1-t)} \\ -\frac{a}{b}\sqrt{t(1-t)} & 1-z \end{pmatrix} \nonumber \end{eqnarray} where $z = \frac{1}{2} \pm \frac{1}{2} \sqrt{1 - \frac{4a^2}{b^2}t(1-t)}$ and $t = \frac{b^2 + 2a^2b - a^2b^2 - a^2}{4a^2b}$. \end{theorem}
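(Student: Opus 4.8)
The plan is to classify the irreducible $*$-representations of $\mathfrak{A}$ and show that, up to unitary equivalence, there are exactly nine of them: eight one-dimensional representations and a single two-dimensional one given by the forms of $A,B,C$ asserted in the statement. Granting this, the direct sum of (one representative of each of) these representations is a $*$-homomorphism $\phi:\mathfrak{A}\to\mathbb{C}^8\oplus\mathbb{M}_2$. It is injective because a C*-algebra is separated by its irreducible representations (every nonzero element is nonzero in some irreducible representation, which is then equivalent to one of our nine). It is surjective because the asserted $A,B,C$ generate $\mathbb{C}^8\oplus\mathbb{M}_2$: the eight scalar triples $(A,B,C)\in\{0,1\}^3$ appearing in the $\mathbb{C}^8$ summand are pairwise distinct, so their images generate $\mathbb{C}^8$, while $A$ and $B$ fail to commute in the $\mathbb{M}_2$ summand, so $A$ and $B$ already generate $\mathbb{M}_2$ there. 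Hence $\phi$ is a $*$-isomorphism carrying $P_0,P_1,P_2$ to the asserted matrices.

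First I would rewrite the relations. Expanding, $[A,aB+bC]=0$ and $[B,aA+C]=0$ are equivalent (using $b\neq0$) to $[A,C]=-\tfrac{a}{b}[A,B]$ and $[B,C]=a[A,B]$, and these in turn force $[C,bA+B]=0$, consistent with the remark preceding the theorem. In particular, in any representation, if $[A,B]=0$ there then $[A,C]=[B,C]=0$ there too, so all three projections commute. Consequently, if $\pi$ is an irreducible representation with $[\pi(A),\pi(B)]=0$, then $\pi(A),\pi(B),\pi(C)$ are commuting projections, hence scalars in $\{0,1\}$ by irreducibility; conversely every triple in $\{0,1\}^3$ satisfies the relations trivially. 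This accounts for the eight one-dimensional representations and the $\mathbb{C}^8$ summand.

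The heart of the argument is the case $[\pi(A),\pi(B)]\neq0$, where I claim $\pi$ is two-dimensional and unique. Using the structure theory of a pair of projections, I would pass to the generic part of $(\pi(A),\pi(B))$ (nonzero here) and assume $\pi(A)=1\otimes e_{11}$ and $\pi(B)=\begin{pmatrix}T&R\\R&1-T\end{pmatrix}$ on $K\otimes\mathbb{C}^2$, with $T$ the angle operator, $0<T<1$, and $R=\sqrt{T(1-T)}$ injective. Writing $\pi(C)=\begin{pmatrix}P&Q\\Q^*&S\end{pmatrix}$: the relation $[\pi(A),a\pi(B)+b\pi(C)]=0$ kills the off-diagonal of $a\pi(B)+b\pi(C)$, so $Q=-\tfrac{a}{b}R$; the relation $[\pi(B),a\pi(A)+\pi(C)]=0$ forces $PT=TP$ (hence $P$ commutes with $R$) and $S=P+(a-\tfrac{a}{b})1+\tfrac{2a}{b}T$; and $\pi(C)^2=\pi(C)$ forces $P+S=1$ together with $P(1-P)=\tfrac{a^2}{b^2}T(1-T)$. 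Combining $P+S=1$ with the formula for $S$ yields $P=c\,1-\tfrac{a}{b}T$ with $c=\tfrac{b-ab+a}{2b}$, and substituting into $P(1-P)=\tfrac{a^2}{b^2}T(1-T)$ collapses everything to the scalar identity $c(1-c)\,1=\tfrac{a^2}{b}\,T$. Since $\tfrac{a^2}{b}\neq0$, this forces $T=tI$ with $t=\tfrac{b^2+2a^2b-a^2b^2-a^2}{4a^2b}$; then $\pi$ on the generic part is a multiple of one fixed two-dimensional representation, irreducibility forces multiplicity one, and reading off $P,Q,S$ recovers precisely the asserted $C$, with $z=\tfrac{1}{2}\pm\tfrac{1}{2}\sqrt{1-\tfrac{4a^2}{b^2}t(1-t)}$. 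One must also check that this value of $t$ lies in $(0,1)$ so that the generic-position representation genuinely exists.

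I expect the main obstacle to be exactly this last case: justifying that an irreducible $\pi$ with $[\pi(A),\pi(B)]\neq0$ is supported on the generic part and that the relations really force $T$ to be scalar. The computation above is the mechanism, but care is needed because $\pi(C)$ need not a priori respect the generic/diagonal decomposition of the pair $(\pi(A),\pi(B))$; the way around this is to observe that $[\pi(A),\pi(C)]$ and $[\pi(B),\pi(C)]$ are scalar multiples of $[\pi(A),\pi(B)]$ and hence lie in $C^*(\pi(A),\pi(B))$, which does preserve that decomposition, so the block computations are legitimate. The remaining surjectivity/injectivity bookkeeping for $\phi$ is routine once the representation theory has been pinned down.
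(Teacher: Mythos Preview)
Your argument is correct and the calculations check out, but the route differs from the paper's. The paper does not invoke the Halmos two-projections theorem; instead it observes that
\[
H=(aB+bC)^2-(a+b)(aB+bC)
\]
commutes with each of $A,B,C$, so in any irreducible representation $H$ is a scalar. From this it follows that $\pi(CB)\in\Span(I,\pi(B),\pi(C),\pi(BC))$, and symmetric relations reduce $\pi(\mathfrak{A})$ to the span of eight words. Hence every irreducible representation lands in $\mathbb{C}$ or $\mathbb{M}_2$, after which the paper parametrises the $2\times2$ case directly by choosing a basis diagonalising $\pi(A)$. Your approach trades this central-element/dimension-bound step for the structural canonical form of a pair of projections and then forces the angle operator $T$ to be the scalar $tI$; this is equally valid and arguably makes the emergence of the specific value of $t$ more transparent, while the paper's method gives a cleaner a priori reason that no higher-dimensional irreducibles can occur.

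One point deserves a firmer justification than your sketch gives. Saying that $[\pi(A),\pi(C)]$ and $[\pi(B),\pi(C)]$ ``lie in $C^*(\pi(A),\pi(B))$, which preserves the decomposition'' is not by itself the needed implication. What you actually use is that these commutators \emph{vanish} on the diagonal part $H_{\mathrm{diag}}$ (since $[\pi(A),\pi(B)]$ does). Then for $v$ in the joint $(i,j)$-eigenspace $H_{ij}$ of $(\pi(A),\pi(B))$ one gets $\pi(A)\pi(C)v=i\,\pi(C)v$ and $\pi(B)\pi(C)v=j\,\pi(C)v$, so $\pi(C)$ preserves each $H_{ij}$, hence $H_{\mathrm{diag}}$, hence by self-adjointness also $H_{\mathrm{gen}}$. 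Irreducibility with $[\pi(A),\pi(B)]\neq0$ then forces $H_{\mathrm{diag}}=0$, and your block computation on $K\otimes\mathbb{C}^2$ is legitimate. With this clarification your proof is complete; your caveat that one must verify $t\in(0,1)$ for the $\mathbb{M}_2$ summand to actually appear is well taken and is a point the paper's proof also leaves implicit.
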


We remark that there are two choices for $z$ in the above theorem. As we will show in the proof, exactly one of the two choices for $z$ leads to operators satisfying the conditions of the theorem.

\begin{proof} We first show that each irreducible representation of $\mathfrak{A}$ has dimension no more that four. Consider the operator \[ H = (aB + bC)^2 - (a+b)(aB + bC). \] By assumption $H$ commutes with $A$, and by an easy computation $H$ commutes with $B$ and $C$ as well. It follows that any irreducible representation of $\mathfrak{A}$ must map $H$ to a scalar multiple of the identity. It follows that under any such representation $\pi$ we have $\pi(CB) \in \Span(I,\pi(B),\pi(C),\pi(BC))$. Similar calculations show that $\pi(BA) \in \Span(I,\pi(A),\pi(B),\pi(AB))$ and $\pi(CA) \in \Span(I,\pi(A),\pi(C),\pi(AC))$. Consequently \[ \pi(\mathfrak{A}) \subseteq \Span(I,\pi(A),\pi(B),\pi(C),\pi(AB),\pi(AC),\pi(BC),\pi(ABC)) \] and hence $\dim(\pi(\mathfrak{A})) \leq 8$. Since $\pi(\mathfrak{A})$ must be a matrix algebra it must be isomorphic to $\mathbb{C}$ or $\mathbb{M}_2$. 

Now there are precisely eight one-dimensional representations of $\mathfrak{A}$ given by the eight possible ways to map the projections $A,B,$ and $C$ to 0 or 1. It remains to consider the four-dimensional representations of $\mathfrak{A}$. We will show that up to unitary equivalence there is only one four-dimensional representation of $\mathfrak{A}$. Indeed, assume we have some irreducible representation $\pi: \mathfrak{A} \rightarrow \mathbb{M}_2$. Then we may assume without loss of generality that $A$ and $B$ do not commute and hence \[ \pi: \quad A \mapsto \begin{pmatrix} 1 & 0 \\ 0 & 0 \end{pmatrix}, \quad B \mapsto \begin{pmatrix} t & \sqrt{t(1-t)} \\ \sqrt{t(1-t)} & 1-t \end{pmatrix} \] for some $t \in (0,1)$ with respect to some choice of basis for $\mathbb{C}^2$. Since $aB + bC$ commutes with $A$, we must have \[ \pi: C \mapsto \begin{pmatrix} z & -\frac{a}{b}\sqrt{t(1-t)} \\ -\frac{a}{b}\sqrt{t(1-t)} & 1-z \end{pmatrix} \] for some choice of $z \in (0,1)$. Using the relation $C^2=C$ we see that \[ z = \frac{1}{2} \pm \frac{1}{2} \sqrt{1 - \frac{4a^2}{b^2}t(1-t)}. \] Finally a tedious calculation shows that the relation $[B, aA + C]=0$ holds only if $t = \frac{b^2 + 2a^2b - a^2b^2 - a^2}{4a^2b}$. The choice of $z$ then depends on the sign of $a$ and the sign of $a^2(b^2-1) + b^2$. We leave these details to the interested reader. It follows that there is only one four-dimensional representation of $\mathfrak{A}$ up to unitary equivalence and hence the isomorphism described in the statement of the theorem is faithful. \end{proof}

\begin{theorem} The sets $C_q^s(3,2)$ and $C_{qc}^s(3,2)$ coincide. Consequently $C_q^s(3,2)$ is closed. \end{theorem}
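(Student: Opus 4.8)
The plan is to reduce the claim $C_q^s(3,2)=C_{qc}^s(3,2)$ to a statement about the $\vec{y}$-slices $S_{\vec{y}}[D_r(3)]$ and then to exploit Theorem \ref{universal3}. By Proposition \ref{bijectionProp2} it suffices to show $D_q(3)=D_{qc}(3)$, and since $D_{qc}(3)$ is compact and convex while $\overline{D_q(3)}=D_{qa}(3)$, it is enough to check that every extreme point of $D_{qc}(3)$ already lies in $D_q(3)$. When $n=3$ the upper-triangular part of a matrix in $D_r(3)$ is a single scalar $w_{0,1}$ (indexed by $i<j<3$, so only the pair $(1,2)$ survives — one coordinate), so each $\vec{y}$-slice $S_{\vec{y}}[D_r(3)]$ is a subset of $\mathbb{R}$, hence a closed bounded interval $[l_{qc}(\vec{y},1),u_{qc}(\vec{y},1)]$. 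Its two endpoints are the only extreme points, and by Theorem \ref{ExtremePointThm} each endpoint is realized by a trace on some $\mathfrak{A}_{\vec{x}}$ with $\vec{x}=(a,b,c)$, which after rescaling (as noted in the remarks preceding Theorem \ref{universal3}) we may take to be $\mathfrak{A}$ as in that theorem when all of $a,b,c$ are nonzero.

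The key point is then that Theorem \ref{universal3} shows $\mathfrak{A}\cong\mathbb{C}^8\oplus\mathbb{M}_2$ is finite-dimensional, so \emph{every} trace on $\mathfrak{A}$ is a convex combination of traces on finite-dimensional blocks; hence the correlation $p(i,j)=\tau(P_iP_j)$ it produces lies in $D_q(3)$ by Theorem \ref{PaulsenWinter}. So every extreme point of $D_{qc}(3)$ coming from a vector $\vec{x}$ with all coordinates nonzero is in $D_q(3)$. First I would dispatch the degenerate cases where some coordinate of $\vec{x}$ vanishes: if, say, $c=x_{1,2}=0$ then the commutation relations $[P_0,aP_1+bP_2]=[P_1,aP_0]=[P_2,bP_0]=0$ force (when $a,b\neq0$) that $P_0$ commutes with both $P_1$ and $P_2$, so the relevant $\mathfrak{A}_{\vec{x}}$ has an even simpler finite-dimensional structure (a direct sum of copies of $\mathbb{C}$ and $\mathbb{M}_2$, the $\mathbb{M}_2$ coming from the possibly-noncommuting pair $P_1,P_2$); and if two or more coordinates vanish the algebra is commutative or reduces to the universal $C^*$-algebra of two projections, which is well known to be subhomogeneous with irreducible representations of dimension $\le 2$. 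In every case $\mathfrak{A}_{\vec{x}}$ is finite-dimensional (or at least has all traces a limit of traces factoring through finite-dimensional quotients), so the extreme point lies in $D_q(3)$.

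Finally I would assemble: since every extreme point of the compact convex set $D_{qc}(3)$ lies in $D_q(3)$, and $D_q(3)\subseteq D_{qc}(3)$, the Krein--Milman theorem gives $D_{qc}(3)=\overline{\co}(\text{extreme points})\subseteq\overline{D_q(3)}$; but in fact we get more — because the interval $S_{\vec{y}}[D_{qc}(3)]$ has \emph{both} endpoints in $D_q(3)$ and $D_q(3)$ restricted to that slice is convex (being an affine image of a convex set), the whole interval lies in $D_q(3)$, for every $\vec{y}$; hence $D_{qc}(3)=D_q(3)$. Transporting back through the affine isomorphism $\pi$ of Proposition \ref{bijectionProp2} gives $C_q^s(3,2)=C_{qc}^s(3,2)$, and since $C_{qc}^s(3,2)$ is closed, $C_q^s(3,2)$ is closed. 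The main obstacle I anticipate is \textbf{not} the finite-dimensional case — Theorem \ref{universal3} hands that to us — but rather making sure the \emph{degenerate} vectors $\vec{x}$ are handled uniformly: one must check that extremal slices can always be separated by a hyperplane whose normal $\vec{x}$ falls into one of the covered cases, and that the ``all traces of $\mathfrak{A}_{\vec{x}}$ give quantum correlations'' conclusion survives when $\mathfrak{A}_{\vec{x}}$ is the (infinite-dimensional but residually-finite-dimensional) universal $C^*$-algebra of two projections rather than a genuinely finite-dimensional algebra. A secondary subtlety is the convexity step in the last paragraph: strictly speaking $D_q(3)$ need not be closed, so the argument that an interval with both endpoints in $D_q(3)$ is contained in $D_q(3)$ relies on convexity of $D_q(3)$, which does hold, but should be stated carefully.
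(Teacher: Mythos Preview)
Your overall strategy matches the paper's: reduce to extreme points of $D_{qc}(3)$ via Theorem \ref{ExtremePointThm}, invoke Theorem \ref{universal3} for generic $\vec{x}$, handle degenerate $\vec{x}$ separately, and conclude by convexity. However, there is a dimension miscount that breaks your final step. For $n=3$ the strict upper-triangular entries are $w_{0,1}, w_{0,2}, w_{1,2}$ --- three coordinates, not one. (The expression $\tfrac{(n-1)(n-2)}{2}$ in the paper is a typo for $\tfrac{n(n-1)}{2}$; note that the paper itself takes $\vec{x}=(a,b,c)\in\mathbb{R}^3$ and writes $a=x_{0,1},\,b=x_{0,2},\,c=x_{1,2}$.) So $S_{\vec{y}}[D_r(3)]$ is a convex body in $\mathbb{R}^3$, not an interval, and your ``both endpoints lie in $D_q$'' argument does not make sense. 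The fix is immediate: in finite dimensions Minkowski's theorem gives $D_{qc}(3)=\co(\mathrm{ext}\,D_{qc}(3))$ without closure, so once every extreme point is shown to lie in the convex set $D_q(3)$ you are done.

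Your treatment of the degenerate $\vec{x}$ also diverges from the paper and, as written, is incomplete. When, say, $c=0$ you correctly observe that $P_0$ becomes central, but the residual universal $C^*$-algebra on the two projections $P_1,P_2$ is \emph{infinite}-dimensional (it is $C^*(\mathbb{Z}_2*\mathbb{Z}_2)$, a continuous field of $2\times 2$ matrices over $[0,1]$), so ``all traces are limits of finite-dimensional traces'' only lands you in $D_{qa}(3)$ --- and you no longer have the interval trick to close the gap. The paper sidesteps this entirely: when a coordinate of $\vec{x}$ vanishes, the optimization $u_{qc}(\vec{y},\vec{x})$ decouples into two independent two-projection problems (e.g.\ if $a=0$ then $u_{qc}=b\,\tau(AC)+c\,\tau(BC)$), and each term is optimized by explicit characteristic functions in $L^\infty[0,1]$; hence $u_{qc}(\vec{y},\vec{x})=u_{loc}(\vec{y},\vec{x})$ and the extreme point is already local, not merely quantum-approximate.
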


\begin{proof} Let $w$ be an extreme point of $D_{qc}(3)$. Then by Theorem \ref{ExtremePointThm} there exists a vector $(a,b,c) \in \mathbb{R}^3$, projections $A,B,C \subset \mathfrak{A}_{(a,b,c)}$, and a trace $\tau$ on $\mathfrak{A}_{(a,b,c)}$ such that $w_{0,1} = \tau(AB), w_{0,2} = \tau(AC)$, and $w_{1,2} = \tau(BC)$. If $a,b,c$ are all non-zero, we see by Theorem \ref{universal3} that $\mathfrak{A}$ is finite dimensional. For the other cases, let us first assume that $a = 0$. Then we have \[ u_{qc}(\vec{y}, \vec{x}) = b \tau(AC) + c \tau(BC) \] with $(\tau(A),\tau(B),\tau(C)) = \vec{y}$. Thus to maximize $u_{qc}(\vec{y}, \vec{x})$ it suffices to maximize $\tau(AC)$ if $b > 0$ or minimize $\tau(AC)$ if $b < 0$, and simultaneously to maximize $\tau(BC)$ if $c > 0$ or minimize $\tau(BC)$ if $c < 0$. This can be achieved in a commutative C*-algebra. Indeed, for any trace $\tau$ and projections $P$ and $Q$ we have \[ \tau(PQ) \in [ \max(0, \tau(P)+\tau(Q)-1), \min(\tau(P),\tau(Q))]. \] It is easy to check that we can choose projections $A,B,C \in L^\infty[0,1]$, the commutative C*-algebra of $L^\infty$ functions on the unit interval [0,1], such that $\tau(AC)$ and $\tau(BC)$ take either the maximum or minimum values in the above interval as needed, where $\tau$ is the Lebesgue integral. For example, to maximize $\tau(AC)$ and minimize $\tau(BC)$, we could identify $A, B$, and $C$ with the functions \begin{eqnarray} f_A(t) & = & \begin{cases} 1 & t \in [0,\tau(A)] \\ 0 & \text{else} \end{cases} \nonumber \\  f_B(t) & = & \begin{cases} 1 & t \in [1-\tau(B),1] \\ 0 & \text{else} \end{cases} \nonumber \end{eqnarray} and \[ f_C(t) = \begin{cases} 1 & t \in [0,\tau(C)] \\ 0 & \text{else} \end{cases} \] respectively. We conclude that $u_{qc}(\vec{y}, \vec{x}) = u_{loc}(\vec{y}, \vec{x})$ in this case. The cases when either $b$ or $c$ equal zero are identical, and the case when two of $a,b,c$ are zero is similar. It follows that every extreme point of $D_{qc}(3)$ is an element of $D_q(3)$. The final statement of the theorem follows from the fact that $D_{qc}(3)$ is a closed set. \end{proof}

\section*{Acknowledgements} This work was partly carried out while the author attended the workshop ``The Many Faceted Connes' Embedding Problem'' held at the Banff International Research Station. The author thanks Vern Paulsen for sharing Corollary \ref{OzawaCor} and Samuel Harris for pointing out Theorem 1.9.5 of his Ph.D. thesis \cite{HarrisThesis}.

\bibliographystyle{plain}
\bibliography{references}
\end{document}